\newtheorem{thm}{Theorem}
\newtheorem{cor}[thm]{Corollary}
\newtheorem{lem}[thm]{Lemma}
\newcommand{\C}{{\mathbb C}}
\newcommand{\R}{{\mathbb R}}
\newcommand{\Z}{{\mathbb Z}}
\newcommand{\inc}{\int_\C}
\newcommand{\inr}{\int_\R}
\newcommand{\ccc}{\left(\frac2\pi\right)^{\frac14}}
\newcommand{\cccc}{\sqrt{\frac2\pi}}
\newcommand{\re}{{\rm Re\ }}
\begin{document}

\title[The Bargmann Transform]{Towards a Dictionary for the\\ Bargmann Transform}

\author{Kehe Zhu}
\address{Department of Mathematics and Statistics, State University of New York, Albany, NY 12222, USA.}
\email{kzhu@math.albany.edu}
\subjclass[2010]{Primary 30H20}
\keywords{Bargmann transform, Fock spaces, Fourier transform, Gabor analysis, Gabor frames, Hilbert transform,
pseudo-differential operators, Toeplitz operators, uncertainty principle.}

\begin{abstract}
There is a canonical unitary transformation from $L^2(\R)$ onto the Fock space $F^2$, called the Bargmann
transform. The purpose of this article is to translate some important results and operators from the context of
$L^2(\R)$ to that of $F^2$. Examples include the Fourier transform, the Hilbert transform, Gabor frames, 
pseudo-differential operators, and the uncertainty principle.
\end{abstract}

\maketitle

\section{Introduction}

The Fock space $F^2$ is the Hilbert space of all entire functions $f$ such that
$$\|f\|^2=\inc|f(z)|^2\,d\lambda(z)<\infty,$$
where
$$d\lambda(z)=\frac1\pi e^{-|z|^2}\,dA(z)$$
is the Gaussian measure on the complex plane $\C$. Here $dA$ is ordinary area measure. The inner product
on $F^2$ is inherited from $L^2(\C,d\lambda)$.  The Fock space is a convenient setting for many problems in
functional analysis, mathematical physics, and engineering. A sample of early and recent work in these areas 
includes \cite{BCI, BC1, BC2, BC3, Guil, JPR, Landau, Lyu, LS1, LS2, OS, P, Seip1, Seip2, SW}. See \cite{Z2} 
for a recent summary of the mathematical theory of Fock spaces.

Another Hilbert space we need is $L^2(\R)=L^2(\R,dx)$. We will consider the Fourier transform, the Hilbert
transform, and several other operators and concepts on $L^2(\R)$. The books \cite{F1, F2, G1} are excellent
sources of information for these classical subjects.

The Bargmann transform $B$ is the operator from $L^2(\R)\to F^2$ defined by
$$Bf(z)=c\inr f(x)e^{2xz-x^2-(z^2/2)}\,dx,$$
where $c=(2/\pi)^{1/4}$. It is well known that $B$ is a unitary operator from $L^2(\R)$ onto $F^2$; see
\cite{F2, G1, Z2}. The easiest way to see this is outlined in the next section. Furthermore, the inverse of $B$ is 
also an integral operator, namely,
$$B^{-1}f(x)=c\inc f(z)e^{2x\overline z-x^2-(\overline z^2/2)}\,d\lambda(z).$$

The Bargmann transform is an old concept in mathematical analysis and mathematical physics.
In this article we attempt to establish a ``dictionary" between $L^2(\R)$ and $F^2$ that is based on the Bargmann 
transform. Thus we translate several important operators and concepts between these two spaces. It goes without 
saying that the dictionary is not complete, and it cannot be complete. Nevertheless, it covers some of the most 
important operators and concepts in Fourier and harmonic analysis, for example, the Fourier transform, the Hilbert 
transform, Gabor frames, the standard commutation relation, pseudo-differential operators, and the uncertainty principle.

We emphasize that we are presenting new forms of some classical results, although a few new results are also obtained
along the way. Some of these new forms look very appealing and appear to have been overlooked in the past. It is our 
hope that this article will generate some new interest in this old area of mathematical analysis.

I thank Hans Feichtinger and Bruno Torresani for their invitation to visit CIRM/Luminy in the fall semester of 2014.
This paper was motivated by discussions with several visitors during my stay at CIRM.

\section{Hermite polynomials}

The standard monomial orthonormal basis for $F^2$ is given by
$$e_n(z)=\sqrt{\frac1{n!}}\,z^n,\qquad n\ge0.$$
Thus the reproducing kernel of $F^2$ is 
$$K(z,w)=\sum_{n=0}^\infty e_n(z)\overline{e_n(w)}=\sum_{n=0}^\infty\frac{(z\overline w)^n}{n!}=e^{z\overline w}.$$ 
The normalized reproducing kernel of $F^2$ at the point $a$ is given by 
$$k_a(z)=e^{-\frac{|a|^2}2+z\overline a}.$$
Each $k_a$ is a unit vector in $F^2$.

To exhibit an orthonormal basis for $L^2(\R)$, recall that for any $n\ge0$ the function
$$H_n(x)=(-1)^ne^{x^2}\frac{d^n}{dx^n}e^{-x^2}$$
is called the nth Hermite polynomial. It is well known that the functions
$$h_n(x)=\frac c{\sqrt{2^nn!}}\,e^{-x^2}H_n(\sqrt2\,x),\qquad n\ge0,$$
form an orthonormal basis for $L^2(\R)$, where $c=(2/\pi)^{1/4}$ again. See \cite{Z2} for more information about
the Hermite functions.

\begin{thm}
For every $n\ge0$ we have $Bh_n=e_n$.
\label{1}
\end{thm}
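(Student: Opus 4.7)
The natural route is to identify the integral kernel $K(z,x) = c\,e^{2xz - x^2 - z^2/2}$ of the Bargmann transform with the bilinear generating series built from the two orthonormal bases $\{h_n\}$ and $\{e_n\}$. The main ingredient is the classical generating function for the Hermite polynomials,
$$e^{2yt - t^2} = \sum_{k=0}^\infty \frac{H_k(y)}{k!}\, t^k, \qquad y,t \in \C.$$
Substituting $y = \sqrt{2}\,x$ and $t = z/\sqrt{2}$ and multiplying both sides by $c\,e^{-x^2}$, a line of algebra (using $h_n(x) = (c/\sqrt{2^n n!})\,e^{-x^2}H_n(\sqrt{2}\,x)$ and $e_n(z) = z^n/\sqrt{n!}$) should rewrite the identity as
$$c\, e^{2xz - x^2 - z^2/2} = \sum_{n=0}^\infty h_n(x)\, e_n(z).$$

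Granting this expansion, the theorem becomes a bookkeeping exercise. Pairing both sides with $h_n$ in $L^2(\R)$ and using orthonormality of the Hermite basis,
$$Bh_n(z) = \int_\R h_n(x)\, c\, e^{2xz - x^2 - z^2/2}\, dx = \sum_{m=0}^\infty e_m(z)\int_\R h_m(x) h_n(x)\, dx = e_n(z),$$
which is exactly what we want.

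The one point that requires care, and therefore the main obstacle, is justifying the interchange of sum and integral. The plan is to argue in $L^2(\R)$: for each fixed $z \in \C$, Parseval's inequality would have us observe
$$\sum_{m=0}^\infty |e_m(z)|^2 = \sum_{m=0}^\infty \frac{|z|^{2m}}{m!} = e^{|z|^2} < \infty,$$
so the series $\sum_m e_m(z)\, h_m$ converges in $L^2(\R)$ to some function $g_z$. On the other hand, the power series converges pointwise in $x$ to the Gaussian kernel $c\,e^{2xz - x^2 - z^2/2}$, which itself lies in $L^2(\R)$. Since the $L^2$-limit agrees with the pointwise limit almost everywhere, $g_z$ coincides with the kernel, and then the inner product with $h_n$ may legitimately be computed term by term. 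Equivalently, one can phrase the same argument by saying that the Hermite expansion of $K(z,\cdot) \in L^2(\R)$ is unique and its coefficients are forced to be $e_n(z)$, so $Bh_n(z) = \langle K(z,\cdot), h_n \rangle_{L^2(\R)} = e_n(z)$.
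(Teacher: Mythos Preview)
Your argument is correct and is the standard derivation: expand the Bargmann kernel via the Hermite generating function as $c\,e^{2xz-x^2-z^2/2}=\sum_n h_n(x)e_n(z)$, then read off $Bh_n=\langle K(z,\cdot),h_n\rangle=e_n(z)$. The paper itself gives no self-contained proof, deferring instead to Theorem~6.8 of \cite{Z2}; your approach is almost certainly the one found there. One small remark on the justification step: the claim that the $L^2$-limit agrees with the pointwise limit a.e.\ strictly speaking only follows along a subsequence, but since the Hermite generating series converges locally uniformly in $x$ (it is a power series in $z$ with coefficients analytic in $x$), the identification of $g_z$ with the kernel is immediate and the termwise integration is fully justified.
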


\begin{proof}
See Theorem 6.8 of \cite{Z2}.
\end{proof}

\begin{cor}
The Bargmann transform is a unitary operator from $L^2(\R)$ onto $F^2$, and it maps the normalized Gauss function
$$g(z)=(2/\pi)^{1/4}e^{-x^2},$$
which is a unit vector in $L^2(\R)$, to the constant function $1$ in $F^2$.
\label{2}
\end{cor}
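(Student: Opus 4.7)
The plan is to deduce the corollary directly from Theorem \ref{1}, which supplies the action of $B$ on the Hermite-function basis.

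First I would observe that $\{h_n\}_{n\ge 0}$ is an orthonormal basis for $L^2(\R)$ and $\{e_n\}_{n\ge 0}$ is an orthonormal basis for $F^2$, both facts already recorded in this section. By Theorem \ref{1}, $B$ sends $h_n$ to $e_n$ for every $n\ge 0$. Since an operator that is initially defined on a dense set and sends an orthonormal basis to an orthonormal basis extends to a unitary map between the two Hilbert spaces, this immediately gives that $B$ is unitary from $L^2(\R)$ onto $F^2$. (To be careful, one should note that the integral defining $Bf$ makes sense at least for $f$ in the dense span of the $h_n$, and the computation $\langle Bh_m,Bh_n\rangle_{F^2}=\langle e_m,e_n\rangle=\delta_{mn}=\langle h_m,h_n\rangle_{L^2(\R)}$ shows $B$ is isometric on this dense subspace, hence extends uniquely to an isometry on all of $L^2(\R)$; surjectivity follows because $\{e_n\}$ is total in $F^2$.)

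For the second claim, I would specialize to $n=0$. The zeroth Hermite polynomial is $H_0\equiv 1$, so the definition of $h_n$ gives
\[
h_0(x)=\frac{c}{\sqrt{2^0\,0!}}\,e^{-x^2}H_0(\sqrt 2\,x)=c\,e^{-x^2}=\left(\frac{2}{\pi}\right)^{1/4}e^{-x^2}=g(x).
\]
On the Fock side, $e_0(z)=z^0/\sqrt{0!}=1$. Applying Theorem \ref{1} with $n=0$ yields $Bg=Bh_0=e_0=1$, which is the asserted identity.

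There is essentially no obstacle here: the corollary is a direct packaging of Theorem \ref{1} together with the trivial evaluation $H_0=1$. The only point that requires a moment of care is the justification that mapping one orthonormal basis to another produces a unitary, but this is a standard Hilbert space fact and needs only the observation that $B$ is originally defined as an integral operator that agrees with this basis-to-basis map on a dense subspace.
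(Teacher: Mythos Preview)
Your proposal is correct and matches the paper's intended approach: the corollary is stated without proof precisely because it follows immediately from Theorem~\ref{1} by the orthonormal-basis-to-orthonormal-basis argument you give, together with the identification $g=h_0$ and $e_0=1$. There is nothing to add.
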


That $B$ maps the Gauss function to a constant is the key ingredient when we later translate Gabor frames with the 
Gauss window to analytic atoms in the Fock space.

\section{The Fourier transform}

There are several normalizations for the Fourier transform. We define the Fourier transform by
$$F(f)(x)=\frac1{\sqrt\pi}\inr f(t)e^{2ixt}\,dt.$$
It is well known that the Fourier transform acts as a bounded linear operator on $L^2(\R)$. In fact, Plancherel's
formula tells us that $F$ is a unitary operator on $L^2(\R)$, and its inverse is given by
$$F^{-1}(f)(x)=\frac1{\sqrt\pi}\inr f(t)e^{-2ixt}\,dt.$$

It is not at all clear from the definition that $F$ is bounded and invertible on $L^2(\R)$. There are also issues 
concerning convergence: it is not clear that the integral defining $F(f)$ converges in $L^2(\R)$ for $f\in L^2(\R)$. 
The situation will change dramatically once we translate $F$ to an operator on the Fock space. In other words, 
we show that, under the Bargmann transform, the operator $F:L^2(\R)\to L^2(\R)$ is unitarily equivalent to an 
extremely simple operator on the Fock space $F^2$.

\begin{thm}
The operator 
$$T=BFB^{-1}:F^2\to F^2$$ 
is given by $Tf(z)=f(iz)$ for all $f\in F^2$. Consequently, the operator 
$$T^{-1}=BF^{-1}B^{-1}:F^2\to F^2,$$ 
where $F^{-1}$ is the inverse Fourier transform, is given by $T^{-1}f(z)=f(-iz)$ for all $f\in F^2$.
\label{3}
\end{thm}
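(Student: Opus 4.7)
My plan is to reduce the claim to the classical fact that the Hermite functions $h_n$ are eigenfunctions of the Fourier transform. Since $\{e_n\}$ is an orthonormal basis for $F^2$ and the candidate operator $S$ defined by $Sf(z)=f(iz)$ satisfies
$$Se_n(z)=e_n(iz)=\frac{(iz)^n}{\sqrt{n!}}=i^n e_n(z),$$
it suffices, by boundedness of $S$ and $T$ (the map $f\mapsto f(iz)$ is clearly an isometry of $F^2$) and linearity, to verify that $Te_n=i^n e_n$ for every $n\ge 0$. By Theorem~\ref{1} we have $B^{-1}e_n=h_n$, so $Te_n=BFh_n$, and the required identity $Te_n=i^n e_n=i^n Bh_n$ is, by the unitarity of $B$, equivalent to
$$F(h_n)=i^n h_n,\qquad n\ge 0.$$

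To establish this eigenvalue identity with the normalization $F(f)(x)=\pi^{-1/2}\int_\R f(t)e^{2ixt}\,dt$ used in the paper, I would first check the base case $n=0$ directly: completing the square in the Gaussian integral gives
$$F(h_0)(x)=\frac{c}{\sqrt\pi}\inr e^{-t^2}e^{2ixt}\,dt=c\,e^{-x^2}=h_0(x),$$
so the eigenvalue is $i^0=1$, which is consistent. For the general case, I would proceed by induction, using the standard creation/annihilation relation for the rescaled Hermite functions $h_n$, namely an identity of the form
$$h_{n+1}(x)=\alpha_n\bigl(2x\,h_n(x)-h_n'(x)\bigr)$$
for an explicit constant $\alpha_n$, together with the intertwining relations
$$F\circ(\text{multiplication by }2x)=-i\,\tfrac{d}{dx}\circ F,\qquad F\circ\tfrac{d}{dx}=-2ix\cdot F,$$
which follow from integration by parts and differentiation under the integral. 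Applying $F$ to the recursion then yields $F(h_{n+1})=i\cdot i^n h_{n+1}=i^{n+1}h_{n+1}$, completing the induction. Alternatively, one can apply $F$ to the generating function $\sum_n h_n(x)s^n/\sqrt{n!}$, evaluate the resulting Gaussian integral explicitly, and read off the eigenvalues by matching powers of $s$.

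The statement for $T^{-1}$ then falls out for free: since $F^{-1}$ has eigenvalues $(\,\overline{i^n}\,)=(-i)^n$ on $h_n$, the same argument yields $T^{-1}e_n=(-i)^n e_n$, which is exactly the action $f(z)\mapsto f(-iz)$ on the basis.

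The main obstacle is bookkeeping with the various nonstandard normalizations in play: the paper's Fourier transform uses the kernel $e^{2ixt}/\sqrt{\pi}$ rather than the more common $e^{-2\pi ixt}$ or $e^{-ixt}/\sqrt{2\pi}$, and the Hermite functions $h_n$ are rescaled by $\sqrt{2}$ inside $H_n$. One must make sure these scalings line up so that the eigenvalues come out as $i^n$ rather than $(-i)^n$ or $(-1)^n$; in particular, it is the specific pairing of a plus sign in $e^{2ixt}$ with the $\sqrt{2}$ inside $H_n(\sqrt{2}x)$ that produces the clean formula $Tf(z)=f(iz)$ rather than $f(-iz)$.
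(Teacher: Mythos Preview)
Your argument is correct, but it proceeds by a genuinely different route from the paper. The paper carries out a direct computation: it introduces a one-parameter family $F_\sigma$ of Fourier-type transforms, computes $BF_\sigma B^{-1}f$ explicitly for polynomials $f$ by two successive Gaussian integrations over $\R$ (with Fubini), and arrives at a closed integral formula in $\sigma$; specializing to $\sigma=\pm1$ then gives $\int_\C f(w)e^{\pm iz\overline w}\,d\lambda(w)=f(\pm iz)$ via the reproducing property. No Hermite eigenvalue fact is invoked.

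Your approach instead leverages the orthonormal basis and reduces everything to the classical identity $F(h_n)=i^nh_n$. This is shorter and more conceptual, and the normalization check you flag is exactly the point that needs care. Note, however, that the logical flow is reversed relative to the paper: there the eigenvalue statement $F(h_n)=i^nh_n$ appears only \emph{after} Theorem~\ref{3}, as Corollary~\ref{5}, derived from the Fock-space picture rather than fed into it. So your proof imports a fact the paper prefers to obtain as a consequence. The paper's longer computation also earns something extra: the general $\sigma$-formula is reused verbatim in the next section to identify the Fock-space form of the dilation operator $D_r$ (Theorem~\ref{7}), which your basis argument does not supply.
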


\begin{proof}
We consider a family of Fourier transforms as follows:
$$F_\sigma(f)(x)=\sqrt{\frac\sigma\pi}\inr f(t)e^{2i\sigma xt}\,dt,$$
where $\sigma$ is a real parameter with the convention that $\sqrt{-1}=i$.

For the purpose of applying Fubini's theorem in the calculations below, we assume that
$f$ is any polynomial (recall that the polynomials are dense in $F^2$, and under
the inverse Bargmann transform, they become the Hermite polynomials times the Gauss
function, which have very good integrability on the real line). For $c=(2/\pi)^{1/4}$ again, we have
\begin{eqnarray*}
F_\sigma(B^{-1}f)(x)&=&c\sqrt{\frac\sigma\pi}\inr e^{2i\sigma xt}\,dt\inc f(z)e^{2t\overline z-t^2-\frac{\overline z^2}2}
\,d\lambda(z)\\
&=&c\sqrt{\frac\sigma\pi}\inc f(z)e^{-\frac{\overline z^2}2}\,d\lambda(z)\inr e^{2t(i\sigma x+\overline z)-t^2}\,dt\\
&=&c\sqrt{\frac\sigma\pi}\inc f(z)e^{-\frac{\overline z^2}2+(i\sigma x+\overline z)^2}\,d\lambda(z)
\inr e^{-(t-i\sigma x-\overline z)^2}\,dt\\
&=&c\sqrt\sigma e^{-\sigma^2x}\inc f(w)e^{\frac{\overline w^2}2+2i\sigma x\overline w}\,d\lambda(w).
\end{eqnarray*}
It follows that
\begin{eqnarray*}
(BF_\sigma B^{-1}f)(z)&=&c^2\sqrt\sigma\inr e^{2xz-x^2-\frac{z^2}2-\sigma^2x^2}\,dx
\inc f(w)e^{\frac{\overline w^2}2+2i\sigma x\overline w}\,d\lambda(w)\\
&=&c^2\sqrt\sigma e^{-\frac{z^2}2}\inr e^{2xz-(1+\sigma^2)x^2}\,dx
\inc f(w)e^{\frac{\overline w^2}2+2i\sigma x\overline w}\,d\lambda(w)\\
&=&c^2\sqrt\sigma e^{-\frac{z^2}2}\inc f(w)e^{\frac{\overline w^2}2}\,d\lambda(w)
\inr e^{2x(z+i\sigma\overline w)-(1+\sigma^2)x^2}\,dx\\
&=&c^2\sqrt{\frac\sigma{\sigma^2+1}}e^{-\frac{z^2}2}\inc f(w)e^{\frac{\overline w^2}2}\,d\lambda(w)
\inr e^{2\cdot\frac{z+i\sigma\overline w}{\sqrt{\sigma^2+1}}\cdot t-t^2}\,dt\\
&=&c^2\sqrt{\frac{\sigma\pi}{\sigma^2+1}}e^{-\frac{z^2}2}\inc f(w)e^{\frac{\overline w^2}2+\frac{(z
+i\sigma\overline w)^2}{\sigma^2+1}}\,d\lambda(w)\\
&=&\sqrt{\frac{2\sigma}{\sigma^2+1}}e^{\left(\frac1{\sigma^2+1}-\frac12\right)z^2}
\inc f(w)e^{\left(\frac12-\frac{\sigma^2}{\sigma^2+1}\right)\overline w^2+
\frac{2i\sigma}{\sigma^2+1}\,z\overline w}\,d\lambda(w).
\end{eqnarray*}

In the case $\sigma=1$, we have $F_\sigma=F$, so that
$$(BFB^{-1}f)(z)=\inc f(w)e^{iz\overline w}\,d\lambda(w)=f(iz).$$
In the case $\sigma=-1$, we have
$$(BF^{-1}B^{-1}f)(z)=\inc f(w)e^{-iz\overline w}\,d\lambda(w)=f(-iz).$$
This completes the proof of the theorem.
\end{proof}

The theorem above is probably known to experts, but we have been unable to locate a reference for it. 
As a consequence of Theorem~\ref{3}, we obtain an alternative proof of Plancherel's formula.

\begin{cor}[Plancherel's formula]
The Fourier transform is a unitary operator on $L^2(\R)$: it is one-to-one, onto, and isometric 
in the sense that
$$\inr|F(f)|^2\,dx=\inr|f|^2\,dx$$
for all $f\in L^2(\R)$.
\label{4}
\end{cor}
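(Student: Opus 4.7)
The strategy is to exploit Theorem~\ref{3}, which identifies $F$ (up to the unitary change of coordinates $B$) with a very simple operator on the Fock space. Since $B$ is unitary by Corollary~\ref{2}, the relation $F=B^{-1}TB$ reduces Plancherel's formula to the statement that the operator $Tf(z)=f(iz)$ is unitary on $F^2$.

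To check that $T$ is unitary on $F^2$, I would first observe that $z\mapsto f(iz)$ is entire whenever $f$ is, so $T$ maps entire functions to entire functions. The rotation invariance of area measure and the fact that $|iz|=|z|$ then give
$$\|Tf\|^2=\inc|f(iz)|^2\,d\lambda(z)=\inc|f(w)|^2\,d\lambda(w)=\|f\|^2$$
via the change of variable $w=iz$, so $T$ is an isometry of $F^2$ into itself. Surjectivity is immediate from the second half of Theorem~\ref{3}: the operator $T^{-1}f(z)=f(-iz)$ is a two-sided inverse (equivalently, given $g\in F^2$, the function $z\mapsto g(-iz)$ lies in $F^2$ by the same rotation-invariance computation and is mapped to $g$ by $T$). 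Hence $T$ is a surjective linear isometry on $F^2$, i.e.\ unitary.

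The corollary now follows by composing three unitaries: $F=B^{-1}TB$ is unitary on $L^2(\R)$, which is precisely the assertion that $F$ is one-to-one, onto, and isometric. There is no substantive obstacle here; all of the analytic work has already been absorbed into the proof of Theorem~\ref{3}, and Plancherel's formula becomes a one-line consequence of the elementary fact that unitarity is preserved under unitary intertwining.
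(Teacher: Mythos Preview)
Your argument is correct and is exactly the approach the paper has in mind: the paper simply records Plancherel's formula as an immediate consequence of Theorem~\ref{3}, and your proposal spells out the one missing observation, namely that $Tf(z)=f(iz)$ is unitary on $F^2$ by rotation invariance of the Gaussian measure, so that $F=B^{-1}TB$ is a composition of unitaries.
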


Note that it is a little vague how the Fourier transform is defined for a function in $L^2(\R)$.
But there is absolutely no ambiguity for the unitary operator $f(z)\mapsto f(iz)$ on $F^2$. This operator
is clearly well defined for every $f\in F^2$ and it is clearly a unitary operator.

The following result is clear from our new representation of the Fourier transform on the Fock space,
because an entire function uniquely determines its Taylor coefficients.

\begin{cor}
For each $n\ge0$ the function $h_n$ is an eigenvector of the Fourier transform and the corresponding 
eigenvalue is $i^n$. Furthermore, the fixed points of the Fourier transform are exactly functions of the form
$$f(x)=\sum_{n=0}^\infty c_nh_{4n}(x),\qquad \{c_n\}\in l^2.$$
\label{5}
\end{cor}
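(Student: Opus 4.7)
The plan is to transfer the entire statement, eigenfunctions and fixed points alike, through the Bargmann transform to the Fock space, where everything becomes transparent because the Fourier transform has been replaced by the rotation $T:f(z)\mapsto f(iz)$.

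For the first part, I would apply Theorem~\ref{1} and Theorem~\ref{3}: since $Bh_n=e_n$ with $e_n(z)=z^n/\sqrt{n!}$, one computes $T e_n(z)=e_n(iz)=(iz)^n/\sqrt{n!}=i^n e_n(z)$, so $e_n$ is an eigenvector of $T$ with eigenvalue $i^n$. Pulling back through the unitary $B$ gives $F h_n = B^{-1} T B h_n = B^{-1}(i^n e_n) = i^n h_n$, which is the desired eigenvalue relation.

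For the description of the fixed points, I would again work on the Fock side: a function $g\in F^2$ satisfies $Tg=g$ if and only if $g(iz)=g(z)$ for all $z\in\C$. Expanding $g(z)=\sum_{n=0}^\infty a_n z^n$ (which converges uniformly on compact sets and whose coefficients are unique because $g$ is entire), the identity $g(iz)=g(z)$ forces $a_n(i^n-1)=0$ for every $n\ge 0$, so $a_n=0$ unless $4\mid n$. Hence the fixed points of $T$ are exactly the functions $g(z)=\sum_{n=0}^\infty c_n e_{4n}(z)$ with $\{c_n\}\in\ell^2$ (the $\ell^2$ condition being equivalent to $g\in F^2$ in the orthonormal basis $\{e_n\}$).

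Finally, I would translate back: since $\{h_n\}$ is an orthonormal basis of $L^2(\R)$ and $B$ is unitary with $Bh_n=e_n$, the fixed points of $F$ on $L^2(\R)$ are precisely the images under $B^{-1}$ of the fixed points of $T$, which gives exactly the functions $f(x)=\sum_{n=0}^\infty c_n h_{4n}(x)$ with $\{c_n\}\in\ell^2$. There is really no obstacle here — the content of the corollary is entirely packaged in Theorems~\ref{1} and~\ref{3}; the only thing to be a little careful about is invoking uniqueness of Taylor coefficients of an entire function when reading off the fixed-point condition, which is what the remark preceding the statement is already pointing at.
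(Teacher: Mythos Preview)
Your argument is correct and is exactly the approach the paper intends: the remark preceding the corollary says it is ``clear from our new representation of the Fourier transform on the Fock space, because an entire function uniquely determines its Taylor coefficients,'' and you have simply spelled this out using Theorems~\ref{1} and~\ref{3}.
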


To go one step further, we can completely determine the spectral properties of the Fourier transform
as a unitary operator on $L^2(\R)$.

\begin{cor}
For each $0\le k\le3$ let $X_k$ denote the closed subspace of $L^2(\R)$ spanned by the
Hermite functions $h_{k+4m}$, $m\ge0$, and let $P_k:L^2(\R)\to X_k$ be the orthogonal
projection. Then
$$L^2(\R)=\bigoplus_{k=0}^3 X_k,$$
and the corresponding spectral decomposition for the unitary operator $F:L^2(\R)\to L^2(\R)$
is given by
$$F=P_0+iP_1-P_2-iP_3.$$
\label{6}
\end{cor}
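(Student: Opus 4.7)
The plan is to read this off directly from Corollary~\ref{5}, which tells us that $Fh_n=i^nh_n$ for every $n\ge0$. The key observation is that the map $n\mapsto i^n$ depends only on the residue of $n$ modulo $4$, and it takes exactly the four values $1,i,-1,-i$ for $n\equiv 0,1,2,3\pmod 4$ respectively. So the entire ONB $\{h_n\}_{n\ge0}$ of $L^2(\R)$ splits into four groups, each consisting of eigenvectors with a single common eigenvalue.

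First I would verify the orthogonal decomposition. Because $\{h_n\}_{n\ge0}$ is an orthonormal basis of $L^2(\R)$ (as recalled in Section~2), and because the index set $\{0,1,2,\ldots\}$ partitions as the disjoint union $\bigsqcup_{k=0}^3\{k+4m:m\ge0\}$, the closed subspaces $X_k=\overline{\mathrm{span}}\{h_{k+4m}:m\ge0\}$ are pairwise orthogonal and their (Hilbert) direct sum is all of $L^2(\R)$. This gives the first claim $L^2(\R)=\bigoplus_{k=0}^3X_k$, and shows that $P_0+P_1+P_2+P_3=I$ on $L^2(\R)$.

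Next I would establish the spectral identity. Given $f\in L^2(\R)$, expand $f=\sum_{k=0}^3\sum_{m=0}^\infty c_{k,m}h_{k+4m}$ with $\{c_{k,m}\}\in\ell^2$; then $P_kf=\sum_{m=0}^\infty c_{k,m}h_{k+4m}$. Since $F$ is bounded on $L^2(\R)$ (Corollary~\ref{4}), it commutes with the $L^2$-convergent sum, and Corollary~\ref{5} gives $Fh_{k+4m}=i^{k+4m}h_{k+4m}=i^kh_{k+4m}$. Therefore
$$Ff=\sum_{k=0}^3 i^k\sum_{m=0}^\infty c_{k,m}h_{k+4m}=\sum_{k=0}^3 i^kP_kf=(P_0+iP_1-P_2-iP_3)f.$$
As $f$ was arbitrary, $F=P_0+iP_1-P_2-iP_3$.

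There really is no hard step here; the whole content lies in Corollary~\ref{5}. The only points worth double-checking are that one may legitimately move $F$ past the infinite sums (which follows from the boundedness of $F$ on $L^2(\R)$, already known by Plancherel or Corollary~\ref{4}) and that the four sets $\{k+4m:m\ge0\}$ genuinely exhaust $\Z_{\ge0}$ so that $\sum_{k=0}^3 P_k=I$. Once these two trivial points are in place, the decomposition drops out.
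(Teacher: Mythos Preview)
Your proof is correct and matches the paper's intended approach: the paper gives no explicit proof of this corollary, presenting it as an immediate consequence of Corollary~\ref{5} (that $Fh_n=i^nh_n$) together with the fact that $\{h_n\}$ is an orthonormal basis of $L^2(\R)$. Your write-up simply makes these routine details explicit.
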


In particular, each $X_k$ is nothing but the eigenspace of the Fourier transform corresponding
to the eigenvalue $i^k$.

Finally in this section we mention that for any real $\theta$ the operator $U_\theta$ defined
by $U_\theta f(z)=f(e^{i\theta}z)$ is clearly a unitary operator on $F^2$. For $\theta=\pm\pi/2$
the resulting operators are unitarily equivalent to the Fourier and inverse Fourier transforms
on $L^2(\R)$. When $\theta$ is a rational multiple of $\pi$, the structure of $U_\theta$ is relatively 
simple. However, if $\theta$ is an irrational multiple of $\pi$, the structure of such an ``irrational
rotation operator" $U_\theta$ is highly nontrivial. 
It would be interesting to find out the operator on $L^2(\R)$ that is unitarily equivalent to $U_\theta$
via the Bargmann transform.

\section{Dilation, translation, and modulation operators}

For any positive $r$ we consider the dilation operator $D_r:L^2(\R)\to L^2(\R)$ defined by
$D_rf(x)=\sqrt rf(rx)$. It is obvious that $D_r$ is a unitary operator on $L^2(\R)$. Furthermore,
it follows from the definition of $F$, $F^{-1}$, and $F_r$ in the previous section that
$$(F_rF^{-1}f)(x)=\sqrt r(FF^{-1}f)(rx)=\sqrt rf(rx)=D_rf(x).$$
Therefore, each $F_r=D_rF$ is a unitary operator on $L^2(\R)$ with
$$F_r^{-1}=F^{-1}D_r^{-1}=F^{-1}D_{1/r},$$
that is,
$$F_r^{-1}f(x)=\frac1{\sqrt{\pi r}}\inr f(t/r)e^{-2ixt}\,dt=\sqrt{\frac r\pi}\inr f(t)e^{-2irxt}\,dt.$$
Combining this with the proof of Theorem~\ref{3}, we obtain the following form of the dilation 
operator $D_r$ on the Fock space.

\begin{thm}
For any $r>0$ let $T_r=BD_rB^{-1}$ on $F^2$. Then
$$T_rf(z)=\sqrt{\frac{2r}{1+r^2}}\inc f(-iw)e^{\left(\frac12-\frac{r^2}{1+r^2}\right)\overline w^2}
e^{\frac{2irz}{1+r^2}\overline w}\,d\lambda(w)$$
for all $f\in F^2$.
\label{7}
\end{thm}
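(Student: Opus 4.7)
The plan is to exploit the factorization $D_r=F_rF^{-1}$ established in the paragraph immediately preceding the theorem. Conjugating by $B$ gives
$$T_r=\bigl(BF_rB^{-1}\bigr)\circ\bigl(BF^{-1}B^{-1}\bigr),$$
and both factors have already been analyzed in the preceding section: by Theorem~\ref{3} the second one is the simple map $g(w)=f(-iw)$, while the first is the $\sigma=r$ specialization of the integral operator $BF_\sigma B^{-1}$ computed explicitly in the displayed chain of equalities inside the proof of Theorem~\ref{3}.

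Concretely, I would feed $g(w)=f(-iw)$ into the $\sigma=r$ case of the penultimate line from the proof of Theorem~\ref{3}, namely
$$(BF_\sigma B^{-1}g)(z)=\sqrt{\frac{2\sigma}{\sigma^2+1}}\,e^{\left(\frac{1}{\sigma^2+1}-\frac{1}{2}\right)z^2}\inc g(w)\,e^{\left(\frac{1}{2}-\frac{\sigma^2}{\sigma^2+1}\right)\overline w^2+\frac{2i\sigma}{\sigma^2+1}z\overline w}\,d\lambda(w),$$
and then tidy the result with a rotation $w\mapsto-iw$ of the integration variable, which is legitimate because $d\lambda$ is invariant under unit-modulus rotations ($|-iw|^2=|w|^2$) and flips the sign of $\overline w^2$ while multiplying $\overline w$ by $i$. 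The leading constant $\sqrt{2r/(1+r^2)}$ is the same one already extracted in the proof of Theorem~\ref{3} (it arises from $c^2\sqrt{r\pi/(1+r^2)}=\sqrt{2r/(1+r^2)}$ with $c^2=\sqrt{2/\pi}$), and regrouping the $\overline w^2$ and $z\overline w$ coefficients yields the formula displayed in the theorem.

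The main technical obstacle is the book-keeping of the Gaussian exponents together with a Fubini justification for the two successive integrations hidden in $BD_rB^{-1}$. For Fubini I would restrict first to the dense subspace of polynomials in $F^2$: by Theorem~\ref{1}, these pull back under $B^{-1}$ to Hermite polynomials times the Gauss function, which decay rapidly on $\R$ and make every interchange of order of integration automatic; the general case then follows by density and continuity of the unitary operator $T_r$. As a sanity check, setting $r=1$ collapses the kernel to $e^{iz\overline w}$, and the reproducing property $\inc g(w)\,e^{iz\overline w}\,d\lambda(w)=g(iz)$ gives $T_1f(z)=f(-i(iz))=f(z)$, consistent with $T_1=I$, which also confirms that the overall normalization has been tracked correctly.
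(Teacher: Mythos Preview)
Your overall strategy is exactly what the paper has in mind: the sentence preceding the theorem (``Combining this with the proof of Theorem~\ref{3}'') is simply an instruction to write $D_r=F_rF^{-1}$, conjugate by $B$, use $BF^{-1}B^{-1}f(z)=f(-iz)$, and then set $\sigma=r$ in the general integral formula derived inside the proof of Theorem~\ref{3}.

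The rotation $w\mapsto-iw$ you propose as a tidying step is, however, both unnecessary and harmful. Feeding $g(w)=f(-iw)$ directly into the $\sigma=r$ formula already produces the integrand $f(-iw)\,e^{(\frac12-\frac{r^2}{1+r^2})\overline w^2+\frac{2irz}{1+r^2}\overline w}$ exactly as it appears in the statement, so no change of variable is required; if you afterwards substitute $w\mapsto-iw$, the argument of $f$ becomes $f(-i(-iw))=f(-w)$ and the $\overline w^2$ coefficient flips sign, giving a different expression. So simply drop the rotation. One further point worth flagging: the honest output of the substitution also carries the prefactor $e^{(\frac{1}{1+r^2}-\frac12)z^2}$ in front of the integral, which the displayed statement omits. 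A direct computation of $T_r1$ (since $B^{-1}1$ is the Gaussian, this is a single real Gaussian integral) yields $\sqrt{2r/(1+r^2)}\,e^{\frac{1-r^2}{2(1+r^2)}z^2}$, so that factor genuinely belongs and the statement as printed has a typo; your $r=1$ sanity check cannot detect this because the exponent $\frac{1-r^2}{2(1+r^2)}$ vanishes at $r=1$.
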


In this case, the operator is much simpler on the space $L^2(\R)$ than on the space $F^2$.
Therefore, it is unlikely that the Fock space will be helpful in the study of dilation operators 
on $L^2(\R)$. Since the theory of wavelets depends on dilation in a critical way, we expect
the Fock space (and its associated complex analysis) to be of limited use for wavelet analysis.
See \cite{Coburn1} for an application of the Bargmann transform to the study of the 
wavelet localization operators.

The situation is completely different for Gabor analysis (see next section), where critical roles are 
played by the so-called translation and modulation operators, whose representations on $F^2$ via 
the Bargmann transform are much more useful. More specifically, for any real numbers $a$ and $b$ 
we define two unitary operators $T_a$ and $M_b$ on $L^2(\R)$ as follows:
$$T_af(x)=f(x-a),\qquad M_bf(x)=e^{2\pi bix}f(x).$$
It is traditional to call $T_a$ a translation operator and $M_b$ a modulation operator. See
\cite{G1} for more information on such operators.

To identify the equivalent form of $T_a$ and $M_b$ on the Fock space, we need the
classical Weyl operators on $F^2$. Recall that for any complex number $a$ the Weyl
operator $W_a$ on $F^2$ is defined by
$$W_af(z)=f(z-a)k_a(z)=f(z-a)e^{z\overline a-\frac{|a|^2}2},$$
where $k_a$ is the normalized reproducing kernel of $F^2$ at $a$. It is well known, and it follows
easily from a change of variables, that each $W_a$ is a unitary operator on $F^2$. See \cite{Z2}
for more information about the Weyl operators.

\begin{thm}
For any $a\in\R$ we have $BT_aB^{-1}=W_a$.
\label{8}
\end{thm}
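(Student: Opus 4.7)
The plan is to verify the identity $BT_a = W_aB$ directly, by computing both sides on an arbitrary $f \in L^2(\R)$ (in practice I would work with $f$ from a dense subspace like the Hermite functions or Schwartz class, where Fubini and the manipulations below are unproblematic).

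First I would compute $B(T_af)(z)$ from the definition of $B$:
$$B(T_af)(z) = c\int_\R f(x-a)\,e^{2xz-x^2-z^2/2}\,dx.$$
The key step is the substitution $u = x - a$, which turns this into
$$c\int_\R f(u)\,e^{2(u+a)z-(u+a)^2-z^2/2}\,du.$$
Expanding the exponent and pulling the factors independent of $u$ out of the integral, I would collect them into a clean prefactor $e^{2az-a^2}$, leaving
$$B(T_af)(z) = e^{2az-a^2}\cdot c\int_\R f(u)\,e^{2u(z-a)-u^2-(z-a)^2/2}\cdot e^{-(z-a)^2/2 + z^2/2 + \text{correction}}\,du,$$
so the remaining task is just bookkeeping: I want to rewrite the exponent so that what is left inside the integral is exactly $c\int_\R f(u)e^{2u(z-a)-u^2-(z-a)^2/2}\,du = (Bf)(z-a)$.

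On the other side, since $a$ is real so $\overline a = a$ and $|a|^2=a^2$, the definition of the Weyl operator gives
$$W_a(Bf)(z) = (Bf)(z-a)\,e^{za-a^2/2}.$$
Substituting the integral formula for $(Bf)(z-a)$ produces the same prefactor $e^{2az - a^2}$ (after combining $e^{za - a^2/2}$ with the $-z^2/2$ piece evaluated at $z-a$), multiplying exactly the same integral against $f(u)$. Matching the two expressions term by term completes the proof.

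I do not expect any real obstacle here; the only subtlety is the exponent algebra, and the statement is essentially a reflection of the fact that translation of the argument of $f$ by a real $a$ shifts the Gaussian kernel $e^{2xz - x^2 - z^2/2}$ by exactly the amount encoded in the normalized reproducing kernel $k_a$. Once I have verified the identity on polynomials composed with the Gauss factor (equivalently, on the Hermite basis $\{h_n\}$, where Theorem~\ref{1} guarantees good integrability), the equality $BT_aB^{-1} = W_a$ extends to all of $L^2(\R)$ by density and by the fact that both $T_a$ and $W_a$ are bounded (indeed unitary).
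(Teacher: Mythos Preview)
Your approach is correct and is actually more economical than the paper's. You verify $BT_a=W_aB$ on $L^2(\R)$ via the single real change of variable $u=x-a$ in the Bargmann integral, after which the identity reduces to matching Gaussian exponents; no Fubini and no double integral are needed. The paper instead starts from a polynomial $f\in F^2$, writes out $T_aB^{-1}f(x)$ as an integral over $\C$, applies $B$, swaps the $\R$ and $\C$ integrals, evaluates the inner Gaussian integral $\int_\R e^{-2x^2+2x(z+a+\overline u)}\,dx$ explicitly, and then simplifies. Both arguments ultimately land on the same exponent algebra, and both rely on density (you on Schwartz/Hermite functions in $L^2(\R)$, the paper on polynomials in $F^2$) together with the unitarity of $T_a$ and $W_a$ to pass to the full space. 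Your route buys simplicity; the paper's route has the minor advantage of being uniform with its proofs of the companion results (e.g.\ for $M_b$ and $F_\sigma$), where the double-integral template is reused verbatim.

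One small presentational point: the intermediate prefactor you wrote as $e^{2az-a^2}$ is only correct once the $-z^2/2$ term is kept alongside it (as you implicitly do two lines later); in a final writeup it is cleaner to go straight to the factor $e^{az-a^2/2}$ multiplying $(Bf)(z-a)$, which is exactly $W_a(Bf)(z)$.
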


\begin{proof}
For any polynomial $f$ in $F^2$ we have
$$T_aB^{-1}f(x)=\ccc\inc f(z)e^{-(x-a-\overline z)^2+(\overline z^2/2)}\,d\lambda(z),$$
and
\begin{eqnarray*}
BT_aB^{-1}f(z)&=&\cccc e^{\frac{z^2}2}\inr e^{-(x-z)^2}\,dx\inc f(u)e^{-(x-a-\overline u)^2
+\frac{\overline u^2}2}\,d\lambda(u)\\
&=&\cccc e^{\frac{z^2}2}\inc f(u)e^{\frac{\overline u^2}2}\,d\lambda(u)\inr e^{-(x-z)^2-(x-a-\overline u)^2}\,dx\\
&=&\cccc e^{-\frac{z^2}2}\inc f(u)e^{-\frac12\overline u^2-a^2-2a\overline u}I(u,z)\,d\lambda(u),
\end{eqnarray*}
where
\begin{eqnarray*}
I(u,z)&=&\inr e^{-2x^2+2xz+2ax+2x\overline u}\,dx\\
&=&e^{(z+a+\overline u)^2/2}\inr e^{-2[x+(z+a+\overline u)/2]^2}\,dx\\
&=&\sqrt{\frac\pi2}e^{(z+a+\overline u)^2/2}.
\end{eqnarray*}
It follows that
\begin{eqnarray*}
BT_aB^{-1}f(z)&=&e^{-z^2/2}\inc f(u)e^{-\frac12\overline u^2-a^2-2a\overline u+\frac12(z+a+\overline u)^2}\,d\lambda(u)\\
&=&\inc f(u)e^{-\frac12a^2-a\overline u+az+\overline uz}\,d\lambda(u)\\
&=&e^{-\frac12a^2+az}\inc f(u)e^{(z-a)\overline u}\,d\lambda(u)\\
&=&e^{-\frac12a^2+az}f(z-a)=W_af(z).
\end{eqnarray*}
This completes the proof of the theorem.
\end{proof}

\begin{thm}
For any real $b$ we have $BM_bB^{-1}=W_{-\pi bi}$.
\label{9}
\end{thm}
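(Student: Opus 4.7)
The plan is to deduce this from Theorems~\ref{3} and~\ref{8}, rather than repeat the Gaussian double-integral computation used in the proof of Theorem~\ref{8}. The key observation is that modulation and translation are conjugate to each other via the Fourier transform, so once we have the Fock-space pictures of both $F$ and $T_a$, the formula for $M_b$ falls out essentially for free.

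Concretely, I would first verify the classical identity
$$M_b = F\,T_{\pi b}\,F^{-1}$$
on $L^2(\R)$. With the normalization $F(f)(x)=\pi^{-1/2}\int_\R f(t)e^{2ixt}\,dt$, this is a one-line calculation: expand $(FT_{\pi b}F^{-1})f(x)$, shift the dummy variable $t\mapsto t+\pi b$ in the outer Fourier integral, and the phase $e^{2\pi bix}$ drops out, leaving $FF^{-1}f(x)=f(x)$. Conjugating both sides by $B$ then gives
$$BM_bB^{-1}=(BFB^{-1})(BT_{\pi b}B^{-1})(BF^{-1}B^{-1})=U\,W_{\pi b}\,U^{-1},$$
where $Uf(z)=f(iz)$ and $U^{-1}f(z)=f(-iz)$ by Theorem~\ref{3}, and $BT_{\pi b}B^{-1}=W_{\pi b}$ by Theorem~\ref{8}.

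The final step is a routine composition. Applying $U^{-1}$ substitutes $-iz$ for $z$; applying $W_{\pi b}$ then substitutes $z-\pi b$ and multiplies by $e^{z\pi b-\pi^2b^2/2}$ (using $\overline{\pi b}=\pi b$); applying $U$ finally sends $z$ to $iz$. Collecting the exponents yields
$$(U\,W_{\pi b}\,U^{-1}f)(z)=f(z+\pi bi)\,e^{\pi biz-\pi^2b^2/2},$$
which is exactly $W_{-\pi bi}f(z)$, since $\overline{-\pi bi}=\pi bi$ and $|-\pi bi|^2=\pi^2b^2$.

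I do not anticipate any real obstacle. The one place to be careful is the sign tracking in the Weyl parameter: because $k_a(z)=e^{z\bar a-|a|^2/2}$ carries the complex conjugate $\bar a$, plugging in the purely imaginary parameter $-\pi bi$ demands a careful bookkeeping of conjugation, and this is precisely what forces the minus sign in $W_{-\pi bi}$ rather than $W_{+\pi bi}$. Alternatively, one could mirror the proof of Theorem~\ref{8} directly, computing $BM_bB^{-1}f(z)$ as a Fubini-justified double integral and completing the square in $x$; this would work but is strictly longer, since it reproves Gaussian identities already embedded in Theorems~\ref{3} and~\ref{8}.
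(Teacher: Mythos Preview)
Your argument is correct and takes a genuinely different route from the paper's proof. The paper computes $BM_bB^{-1}f(z)$ directly as a Fubini-justified double integral over $\R\times\C$, completes the square in the real variable, and simplifies the resulting Gaussian integrals to arrive at $e^{-\frac12\pi^2b^2+\pi biz}f(z+\pi bi)=W_{-\pi bi}f(z)$ --- exactly the alternative you mention at the end and deliberately avoid.

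Your approach is more conceptual: you exploit the standard Fourier-conjugation identity $M_b=FT_{\pi b}F^{-1}$ (which you verify correctly for this normalization) and then transport it to $F^2$ via Theorems~\ref{3} and~\ref{8}, reducing the claim to the easy composition $UW_{\pi b}U^{-1}=W_{-\pi bi}$. This is shorter, avoids repeating the Gaussian calculus already done in the proofs of Theorems~\ref{3} and~\ref{8}, and makes transparent \emph{why} the Weyl parameter comes out purely imaginary: the rotation $U$ conjugates $W_a$ to $W_{-ia}$ in general. The paper's direct computation, by contrast, is self-contained and does not rely on Theorem~\ref{3}, so it would survive even in a presentation that omitted the Fourier-transform dictionary; but given that Theorem~\ref{3} is already in hand, your route is the cleaner one.
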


\begin{proof}
For any polynomial $f$ in $F^2$ we have
$$M_bB^{-1}f(x)=\ccc e^{2\pi bix}\inc f(z)e^{-(x-\overline z)^2+\frac{\overline z^2}2}\,d\lambda(z),$$
and
\begin{eqnarray*}
BM_bB^{-1}f(z)&=&\cccc e^{\frac{z^2}2}\!\!\inr e^{-(x-z)^2+2\pi bix}\,dx\!\!\inc f(u)e^{-(x-\overline u)^2
+\frac{\overline u^2}2}\,d\lambda(u)\\
&=&\cccc e^{\frac{z^2}2}\inc f(u)e^{\frac12\overline u^2}\,d\lambda(u)\inr e^{-(x-z)^2-(x-\overline u)^2+2\pi bix}\,dx\\
&=&\cccc e^{-\frac{z^2}2}\inc f(u)e^{-\frac{\overline u^2}2}\,d\lambda(u)\inr e^{-2x^2+2x(z+\overline u+\pi bi)}\,dx\\
&=&\cccc e^{-\frac{z^2}2}\!\!\inc f(u)e^{-\frac{\overline u^2}2+\frac12(z+\overline u+\pi bi)^2}d\lambda(u)
\!\!\inr e^{-2[x-(z+\overline u+\pi bi)/2]^2}dx\\
&=&e^{-\frac{z^2}2}\inc f(u)e^{-\frac{\overline u^2}2+\frac12(z+\overline u+\pi bi)^2}\,d\lambda(u)\\
&=&e^{-\frac12\pi^2b^2+\pi biz}\inc f(u)e^{(z+\pi bi)\overline u}\,d\lambda(u)\\
&=&e^{-\frac12\pi^2b^2+\pi biz}f(z+\pi bi)=W_{-\pi bi}f(z).
\end{eqnarray*}
The proves the desired result.
\end{proof}

\begin{cor}
Let $a$ and $b$ be any pair of real numbers. Then we have
$$B(M_bT_a)B^{-1}=e^{\pi abi}W_{a-\pi bi}.$$
Consequently, if $g\in L^2(\R,dx)$ and $f=Bg$. Then the Bargmann transform maps the function
$M_bT_ag$ to the function $e^{\pi abi}W_{a-\pi bi}f$.
\label{10}
\end{cor}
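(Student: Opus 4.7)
The plan is to reduce the corollary to the composition of the two Weyl operators produced by Theorems~\ref{8} and~\ref{9}, and then compute that composition explicitly.

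First I would write
\begin{equation*}
B(M_bT_a)B^{-1}=(BM_bB^{-1})(BT_aB^{-1})=W_{-\pi bi}\,W_a,
\end{equation*}
invoking Theorem~\ref{9} for the first factor and Theorem~\ref{8} for the second. This immediately shifts the entire question onto $F^2$, where only the algebra of Weyl operators is involved.

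Next I would derive the standard composition rule $W_\alpha W_\beta=e^{(\overline\alpha\beta-\alpha\overline\beta)/2}W_{\alpha+\beta}$ for arbitrary complex $\alpha,\beta$. I would do this by direct substitution: using $W_\beta f(z)=f(z-\beta)e^{z\overline\beta-|\beta|^2/2}$ and then applying $W_\alpha$, the argument of $f$ becomes $z-(\alpha+\beta)$, and the accumulated exponential factor is $z(\overline\alpha+\overline\beta)-\alpha\overline\beta-\tfrac12|\alpha|^2-\tfrac12|\beta|^2$. Comparing with the exponential factor $z(\overline\alpha+\overline\beta)-\tfrac12|\alpha+\beta|^2$ in $W_{\alpha+\beta}$ and using $|\alpha+\beta|^2=|\alpha|^2+|\beta|^2+\alpha\overline\beta+\overline\alpha\beta$ gives the claimed cocycle.

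Finally I would specialize to $\alpha=-\pi bi$ and $\beta=a$. Since $\overline\alpha\beta-\alpha\overline\beta=(\pi bi)a-(-\pi bi)a=2\pi abi$, the cocycle reduces to $W_{-\pi bi}W_a=e^{\pi abi}W_{a-\pi bi}$, which is precisely the stated identity. The second assertion then follows by applying both sides to $f=Bg$. The only delicate point is the sign bookkeeping in the phase factor; once the composition law is written down in the form above, the conclusion is immediate, so I do not anticipate any genuine obstacle.
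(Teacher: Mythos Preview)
Your proposal is correct and follows essentially the same approach as the paper: factor $B(M_bT_a)B^{-1}$ as $W_{-\pi bi}W_a$ via Theorems~\ref{8} and~\ref{9}, then use the Weyl composition law to obtain $e^{\pi abi}W_{a-\pi bi}$. The only difference is that the paper cites the composition identity $W_\alpha W_\beta=e^{(\overline\alpha\beta-\alpha\overline\beta)/2}W_{\alpha+\beta}$ from \cite{Z2}, whereas you derive it directly; your derivation and the subsequent specialization to $\alpha=-\pi bi$, $\beta=a$ are both correct.
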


\begin{proof}
This follows from the two theorems above and the identity (2.22) on page 61 of \cite{Z2}.
\end{proof}

\section{Gabor frames}

A sequence $\{f_n\}$ in a Hilbert space $H$ is called a frame if there exists a positive constant $C$ such that
$$C^{-1}\|f\|^2\le\sum_{n=1}^\infty|\langle f,f_n\rangle|^2\le C\|f\|^2$$
for all $f\in H$. The following result contains the most important properties of frames in a Hilbert space.

\begin{thm}
A sequence $\{f_n\}$ in a Hilbert space $H$ is a frame if and only if the following conditions hold.
\begin{enumerate}
\item[(i)] For any $\{c_n\}\in l^2$ the series $\sum c_nf_n$ converges in the norm topology of $H$.
\item[(ii)] For any $f\in H$, there exists a sequence $\{c_n\}\in l^2$ such that $f=\sum c_nf_n$.
\end{enumerate}
\label{11}
\end{thm}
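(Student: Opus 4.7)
The plan is to prove the equivalence via the analysis/synthesis operator pair. Introduce the synthesis operator $U:\ell^2\to H$ formally by $U\{c_n\}=\sum_{n=1}^\infty c_nf_n$ and the analysis map $V:H\to\ell^2$ formally by $Vf=\{\langle f,f_n\rangle\}$. Whenever both sides make sense, a routine computation gives $\langle U\{c_n\},f\rangle=\langle\{c_n\},Vf\rangle$, so $U$ and $V$ are formal adjoints. The upper and lower frame bounds amount precisely to $V$ being bounded and bounded below, respectively, and the two conditions (i) and (ii) amount to $U$ being everywhere defined and surjective. The whole statement is thus essentially a translation between properties of $U$ and properties of $V=U^*$, handled by the closed graph theorem, uniform boundedness, and the open mapping theorem.

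For the forward direction, assume $\{f_n\}$ is a frame. The upper bound $\sum|\langle f,f_n\rangle|^2\le C\|f\|^2$ says $Vf\in\ell^2$ with $\|Vf\|\le\sqrt C\|f\|$, so $V$ is bounded. Its adjoint $U=V^*:\ell^2\to H$ is then bounded, and a direct check against $e_k$ shows $Ue_k=f_k$ and hence $U\{c_n\}=\sum c_nf_n$, giving (i). For (ii), form the frame operator $S=UV:H\to H$, which is bounded, positive, self-adjoint, and, by the lower frame bound $\langle Sf,f\rangle=\sum|\langle f,f_n\rangle|^2\ge C^{-1}\|f\|^2$, invertible. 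Then $f=S(S^{-1}f)=\sum_{n}\langle S^{-1}f,f_n\rangle f_n$, and the sequence $\{\langle S^{-1}f,f_n\rangle\}=V(S^{-1}f)$ lies in $\ell^2$, which proves (ii).

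For the converse, assume (i) and (ii). Condition (i) lets us define $U:\ell^2\to H$ by $U\{c_n\}=\sum c_nf_n$; to show $U$ is bounded, let $U_N\{c_n\}=\sum_{n\le N}c_nf_n$, note each $U_N$ is bounded, and observe that $U_N\{c_n\}\to U\{c_n\}$ in $H$ for every $\{c_n\}\in\ell^2$ by (i). The uniform boundedness principle then yields $\sup_N\|U_N\|<\infty$, and letting $N\to\infty$ shows $U$ is bounded. Its adjoint $V=U^*$ satisfies $\langle f,Ue_k\rangle=\langle Vf,e_k\rangle$, i.e.\ $(Vf)_k=\langle f,f_k\rangle$, so boundedness of $V$ gives the upper frame bound. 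Finally, (ii) says $U$ is surjective, so by the open mapping theorem there exists $c>0$ such that every $f\in H$ admits a preimage $\{c_n\}\in\ell^2$ with $\|\{c_n\}\|\le c\|f\|$; equivalently, $U^*=V$ is bounded below, which is exactly the lower frame bound.

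The main obstacle is the converse direction: one must resist trying to construct coefficients by hand and instead recognize that (i) and (ii) are assertions about the operator $U$, so the frame bounds fall out of the standard Banach space trio (uniform boundedness to bound $U$ from (i), open mapping to bound $U^*$ below from (ii)). The key subtlety is that in (ii) one is given only the existence of some $\ell^2$ coefficient sequence, not a controlled one, and the quantitative control needed for the lower frame bound must be extracted via the open mapping theorem rather than assumed.
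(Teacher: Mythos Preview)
Your argument is correct and follows the standard synthesis/analysis-operator route: bound $V$ from the upper frame inequality to get (i), invert the frame operator $S=UV$ to get (ii), and for the converse use uniform boundedness on the partial-sum operators $U_N$ together with the open mapping theorem (equivalently, the closed range/duality fact that a bounded surjection has adjoint bounded below). All of this is sound.

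The paper itself does not give a proof of this theorem at all; it simply remarks that the result is well known and refers the reader to Christensen's book. So there is no in-paper argument to compare yours against---you have supplied precisely the standard proof that the paper leaves to the citation.
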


The result above is well known and is the foundation for the theory of frames. See \cite{C} for this and 
other properties of general frames in Hilbert spaces.

Let $\{a_n\}$ and $\{b_n\}$ be two sequences of real numbers, each consisting of distinct values, and let 
$g\in L^2(R)$. Write $g_n=M_{b_n}T_{a_n}g$ for $n\ge1$. We say that $\{g_n\}$ is a Gabor frame for 
$L^2(\R)$ if there exists a positive constant $C$ such that
$$C^{-1}\|f\|^2\le\sum_{n=1}^\infty|\langle f,g_n\rangle|^2\le C\|f\|^2$$
for all $f\in L^2(\R)$. In this case, $g$ is called the window function of the Gabor frame $\{g_n\}$.

By Theorem~\ref{11} above, if $\{g_n\}$ is a Gabor frame, then every function $f\in L^2(\R)$ can be 
represented in the form
$$f=\sum_{n=1}^\infty c_ng_n,$$
where $\{c_n\}\in l^2$ and the series converges in the norm topology of $L^2(\R)$. Conversely, if $\{g_n\}$ 
is a Gabor frame and $\{c_n\}\in l^2$, then the series above converges to some $f\in L^2(\R)$ in the norm 
topology of $L^2(\R)$. The representation is generally not unique though.

There are two classical examples of window functions in Gabor analysis.

Let $g(x)$ be the characteristic function of the unit interval $[0,1)$ and let $\{z_n=a_n+ib_n\}$ denote any fixed
arrangement of the square lattice $\Z^2$ into a sequence. Then $\{g_n\}$ is not only a Gabor frame, it is also an 
orthonormal basis. To see this, it is more transparent to use two indices:
$$g_{nm}(x)=\chi_{[n,n+1)}(x)e^{2m\pi ix},\qquad (n,m)\in\Z^2.$$
Now it is clear that
$$f(x)=\sum_{n\in\Z}f(x)\chi_{[n,n+1)}(x)=\sum_{n\in\Z}f_n(x),$$
where the functions $f_n(x)=f(x)\chi_{[n,n+1)}(x)$ are mutually orthogonal in $L^2(\R)$. On the other hand, for each 
$n\in\Z$, the function $f_n(x)$ is compactly support on $[n,n+1)$, so it can be expanded into a Fourier series:
$$f_n(x)=\sum_{m\in\Z}c_{nm}e^{2m\pi ix},\qquad n\le x<n+1,$$
whose terms are mutually orthogonal over $[n,n+1)$. Therefore, we have
$$f(x)=\sum_{(n,m)\in\Z^2}c_{nm}\chi_{[n,n+1)}(x)e^{2m\pi ix}=\sum_{(n,m)\in\Z^2}c_{nm}g_{nm}(x),$$
where $g_{nm}=M_mT_ng$ are mutually orthogonal in $L^2(\R)$.

Thus the characteristic function of the unit interval $[0,1)$ is a Gabor window, which is usually called a box window.

Another important Gabor window is the Gauss function $g(x)=e^{-x^2}$, which can be thought of as a smooth analog 
of the box window. However, it is not a trivial matter to see that the Gauss function is a Gabor window. It will become 
clear once we establish the connection with the Fock space. Furthermore, using the theory of Fock spaces, we will be 
able to know exactly which translation and modulation sequences give rise to Gabor frames for the Gauss window.

First observe that the following result is a consequence of Corollary~\ref{10}.

\begin{cor}
Let $\{a_n\}$ and $\{b_n\}$ be two sequences of real numbers, each consisting of distinct points in $\R$.
Let $g(x)=e^{-x^2}$ be the Gauss window function. Then the system $\{g_n=M_{b_n}T_{a_n}g\}$ is a
Gabor frame for $L^2(\R)$ if and only if the sequence $\{k_{z_n}\}$ is a frame for $F^2$, where
$z_n=a_n-\pi bi_n$ and $k_{z_n}$ is the normalized reproducing kernel of $F^2$ at $z_n$.
\label{12}
\end{cor}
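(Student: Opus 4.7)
The plan is to transfer the frame inequality from $L^2(\R)$ to $F^2$ using the unitarity of $B$, so the task reduces to identifying $Bg_n$ as (a unimodular multiple of) $k_{z_n}$ up to an overall nonzero constant.

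First I would apply Corollary~\ref{10} directly. Since $g_n = M_{b_n}T_{a_n}g$, writing $G = Bg$ we get
$$Bg_n = B(M_{b_n}T_{a_n})B^{-1}G = e^{\pi a_n b_n i}\,W_{a_n - \pi b_n i}G = e^{\pi a_n b_n i}\,W_{z_n}G.$$
Next I would identify $G$. The Gauss window $g(x)=e^{-x^2}$ is not normalized, but Corollary~\ref{2} says that $(2/\pi)^{1/4}e^{-x^2}$ maps to the constant function $1$, so $G = Bg = (\pi/2)^{1/4}\cdot 1$. Applying the Weyl operator to the constant $1$ using the definition $W_a f(z) = f(z-a)k_a(z)$ immediately gives $W_{z_n}(1) = k_{z_n}$. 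Therefore
$$Bg_n = (\pi/2)^{1/4}\,e^{\pi a_n b_n i}\,k_{z_n}.$$

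The key step is then to rewrite the Gabor frame inequality using the fact that $B$ is a unitary operator (Theorem~\ref{1} / Corollary~\ref{2}). For any $f \in L^2(\R)$, setting $F = Bf \in F^2$,
$$\langle f, g_n\rangle_{L^2(\R)} = \langle F, Bg_n\rangle_{F^2} = (\pi/2)^{1/4}\,e^{-\pi a_n b_n i}\,\langle F, k_{z_n}\rangle_{F^2}.$$
The unimodular factor disappears in absolute value, so
$$\sum_{n=1}^\infty |\langle f, g_n\rangle|^2 = \sqrt{\pi/2}\,\sum_{n=1}^\infty |\langle F, k_{z_n}\rangle|^2,$$
while $\|f\|_{L^2(\R)} = \|F\|_{F^2}$. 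The frame inequality $C^{-1}\|f\|^2 \le \sum|\langle f, g_n\rangle|^2 \le C\|f\|^2$ for all $f\in L^2(\R)$ is therefore equivalent (possibly with a different frame constant, adjusted by the factor $\sqrt{\pi/2}$) to the frame inequality for $\{k_{z_n}\}$ in $F^2$, since $f \mapsto F = Bf$ is a bijection of $L^2(\R)$ onto $F^2$.

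There is no real obstacle here: the corollary is essentially a bookkeeping consequence of Corollary~\ref{10} together with the fact, highlighted after Corollary~\ref{2}, that $B$ sends the Gauss window to a constant. The only thing to be careful about is tracking the unimodular phase $e^{\pi a_n b_n i}$ and the harmless multiplicative constant $(\pi/2)^{1/4}$, both of which are absorbed into the frame bound when passing to squared absolute values.
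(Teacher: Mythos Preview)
Your proof is correct and follows exactly the route the paper indicates: the corollary is stated as an immediate consequence of Corollary~\ref{10}, and you have simply spelled out the details---identifying $Bg$ as a constant via Corollary~\ref{2}, computing $W_{z_n}(1)=k_{z_n}$, and transferring the frame inequality through the unitary $B$ while absorbing the unimodular phase and the scalar $(\pi/2)^{1/4}$ into the frame constants.
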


The are many fundamental questions that Gabor analysis tries to address. Among them we mention the
following two.
\begin{enumerate}
\item[(1)] Characterize all window functions. Recall that we say a function $g\in L^2(\R)$ is a Gabor window if 
there exist two sequences $\{a_n\}$ and $\{b_n\}$ such that $\{g_n=M_{b_n}T_{a_n}g\}$ is a Gabor frame.
\item[(2)] Given a Gabor window $g$, characterize all sequences $\{a_n\}$ and $\{b_n\}$ such that 
$\{M_{b_n}T_{a_n}g\}$ is a Gabor frame.
\end{enumerate}

The most popular Gabor frames are constructed using rectangular lattices $a\Z\times b\Z$. Such Gabor frames will 
be called regular. Slightly more general are lattices based on congruent parallelograms: $\omega_1\Z+\omega_2\Z$, 
where $\omega_1$ and $\omega_2$ are two nonzero complex numbers which are linearly independent in $\R^2$. 
Gabor frames based on such lattices will also be called regular.

Semi-regular lattices are of the form $\{a_n\}\omega_1+\{b_m\}\omega_2$, where $\{a_n\}$ and $\{b_m\}$ are two 
sequences of distinct real numbers, and $\omega_1$ and $\omega_2$ are two complex numbers that are linearly 
independent in $\R^2$. These are lattices based on parallelograms of different sizes. The resulting Gabor frames 
will be called semi-regular.

Irregular Gabor frames are constructed using an arbitrary sequence $z_n=(a_n,b_n)$ in $\R^2$.

To understand the two questions raised earlier about Gabor frames, we need the notions of atomic decomposition 
and sampling sequences for $F^2$.

Let $\{z_n\}$ denote a sequence of (distinct) points in $\C$. We say that atomic decomposition for $F^2$
holds on the sequence $\{z_n\}$ if
\begin{enumerate}
\item[(1)] For any sequence $\{c_n\}\in l^2$ the series
$$\sum_{n=1}^\infty c_nk_{z_n}(z)$$
converges in the norm topology of $F^2$.
\item[(2)] For any function $f\in F^2$ there exists a sequence $\{c_n\}\in l^2$ such that
$$f(z)=\sum_{n=1}^\infty c_nk_{z_n}.$$
\end{enumerate}

We say that the sequence $\{z_n\}$ is sampling for $F^2$ if there exists a positive constant $C$ such that
$$C^{-1}\|f\|^2\le\sum_{n=1}^\infty|f(z_n)|^2e^{-|z_n|^2}\le C\|f\|^2$$
for all $f\in F^2$. See \cite{Z2} for basic information about atomic decomposition and sampling in $F^2$.

\begin{thm}
Given a sequence $\{z_n=a_n-\pi b_ni\}$ of distinct points in $\C$, the following three conditions are equivalent.
\begin{enumerate}
\item[(1)] The system $\{g_n=M_{b_n}T_{a_n}g\}$ is a Gabor frame for $L^2(\R)$, where $g$
is the Gauss window.
\item[(2)] The sequence $\{z_n\}$ is sampling for $F^2$.
\item[(3)] Atomic decomposition for $F^2$ holds on $F^2$.
\end{enumerate}
\label{13}
\end{thm}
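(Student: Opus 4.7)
The proof will consist of showing that each of (1), (2), (3) is equivalent to the single condition that the sequence of normalized reproducing kernels $\{k_{z_n}\}$ is a frame for $F^2$. The heavy lifting has already been done in Corollary~\ref{10}--Corollary~\ref{12} and Theorem~\ref{11}; what remains is essentially a bookkeeping argument combined with one explicit inner-product calculation.

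The starting observation is the reproducing property: for any $f\in F^2$,
$$\langle f,k_{z_n}\rangle=e^{-|z_n|^2/2}f(z_n),$$
so that $|\langle f,k_{z_n}\rangle|^2=|f(z_n)|^2e^{-|z_n|^2}$. Summing over $n$, this identifies the sampling inequality in (2) with the double frame inequality for $\{k_{z_n}\}$. Hence (2) holds if and only if $\{k_{z_n}\}$ is a frame for $F^2$.

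Next, Corollary~\ref{12} asserts that (1) is equivalent to $\{k_{z_n}\}$ being a frame for $F^2$ (this is where the Bargmann transform enters, via the fact that $B$ sends the Gauss window to the constant $1$ and the operators $M_{b_n}T_{a_n}$ to scalar multiples of the Weyl operators $W_{z_n}$, which in turn act on $1$ to produce the normalized kernels $k_{z_n}$). Combining this with the previous paragraph yields (1)$\Leftrightarrow$(2). Finally, the two defining conditions of atomic decomposition on $\{z_n\}$ are exactly conditions (i) and (ii) of Theorem~\ref{11} applied to the sequence $\{k_{z_n}\}$; hence (3) is equivalent to $\{k_{z_n}\}$ being a frame for $F^2$, which gives (3)$\Leftrightarrow$(1).

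There is no genuine obstacle here: the statement is essentially an unpacking of definitions once Corollary~\ref{12} and Theorem~\ref{11} are in hand. The only point requiring care is to verify that the normalization chosen in the definition of $k_{z_n}$ is precisely the one that turns the raw sampling inequality into frame inequalities with identical constants, i.e. that the exponential weight $e^{-|z_n|^2}$ in the definition of sampling exactly matches the factor $e^{-|z_n|^2}$ produced by $|\langle f,k_{z_n}\rangle|^2$. This matching is what lets the three conditions all share the same constant $C$ (up to harmless rescaling) and makes the equivalences immediate.
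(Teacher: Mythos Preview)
Your proposal is correct and follows essentially the same route as the paper: both use Corollary~\ref{12} for (1)$\Leftrightarrow$frame and the identity $\langle f,k_{z_n}\rangle=e^{-|z_n|^2/2}f(z_n)$ for (2)$\Leftrightarrow$frame. The only minor difference is that for (3) the paper quotes the external reference \cite{Z2} for the equivalence of sampling and atomic decomposition, whereas you observe directly that the two defining conditions of atomic decomposition on $\{z_n\}$ are precisely conditions (i) and (ii) of Theorem~\ref{11} applied to $\{k_{z_n}\}$; your argument is therefore slightly more self-contained but otherwise the same.
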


\begin{proof}
Recall from Corollary \ref{12}  that $\{g_n\}$ is a Gabor frame for $L^2(\R)$ iff $\{k_{z_n}\}$ is a frame for $F^2$. 
Since $\langle f,k_z\rangle=f(z)e^{-\frac12|z|^2}$, we see that $\{k_{z_n}\}$ is a frame for $F^2$ if and only
if $\{z_n\}$ is a sampling sequence for $F^2$. It is well known that $\{z_n\}$ is a sampling sequence if and only
if atomic decomposition holds on $\{z_n\}$; see \cite{Z2} for example.
\end{proof}

More generally, we have the following.

\begin{thm}
Suppose $\{a_n\}$ and $\{b_n\}$ are sequences of real numbers and $g\in L^2(\R)$. Then
$\{M_{b_n}T_{a_n}g\}$ is a Gabor frame for $L^2(\R)$ if and only if $\{W_{z_n}f\}$ is a
frame in $F^2$, where $z_n=a_n-\pi bi$, $W_{z_n}$ are the Weyl operators, and $f=Bg$.
\label{14}
\end{thm}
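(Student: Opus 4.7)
The plan is to bootstrap directly off Corollary~\ref{10}: it tells us that, up to the unimodular scalar $e^{\pi a_n b_n i}$, the Bargmann transform conjugates each $M_{b_n}T_{a_n}$ into a Weyl operator $W_{z_n}$ with $z_n=a_n-\pi b_n i$. Since the Bargmann transform is unitary and unimodular multipliers cannot affect the size of inner products, the frame inequalities on $L^2(\R)$ and $F^2$ should translate into one another term by term.

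More concretely, I would first apply Corollary~\ref{10} to write $B(M_{b_n}T_{a_n}g)=e^{\pi a_n b_n i}W_{z_n}f$ with $f=Bg$. Next, for an arbitrary $h\in L^2(\R)$, set $F=Bh\in F^2$ and use the unitarity of $B$ (Corollary~\ref{2}) together with the previous identity to compute
$$\langle h,M_{b_n}T_{a_n}g\rangle=\langle Bh,B(M_{b_n}T_{a_n}g)\rangle=e^{-\pi a_n b_n i}\langle F,W_{z_n}f\rangle.$$
Taking absolute values squared kills the unimodular factor and gives
$$|\langle h,M_{b_n}T_{a_n}g\rangle|^2=|\langle F,W_{z_n}f\rangle|^2,$$
while $\|h\|_{L^2(\R)}=\|F\|_{F^2}$.

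Summing over $n$, the two-sided frame inequality
$$C^{-1}\|h\|^2\le\sum_{n=1}^\infty|\langle h,M_{b_n}T_{a_n}g\rangle|^2\le C\|h\|^2\qquad(h\in L^2(\R))$$
is therefore identical, after the substitution $h\mapsto F=Bh$, to
$$C^{-1}\|F\|^2\le\sum_{n=1}^\infty|\langle F,W_{z_n}f\rangle|^2\le C\|F\|^2\qquad(F\in F^2).$$
Since $B$ is a bijection from $L^2(\R)$ onto $F^2$, $h$ ranges over all of $L^2(\R)$ if and only if $F$ ranges over all of $F^2$, so the two statements are equivalent, which is the claim. There is essentially no obstacle here beyond bookkeeping; the only mild subtlety is simply noting that the phase factor $e^{\pi a_n b_n i}$ vanishes when one passes to $|\cdot|^2$, so it plays no role in the frame bounds.
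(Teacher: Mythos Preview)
Your proof is correct and is exactly the argument the paper has in mind: the paper's own proof is the single line ``This follows from Corollary~\ref{10} again,'' and what you have written is simply the natural unpacking of that sentence via the unitarity of $B$ and the disappearance of the unimodular phase under $|\cdot|^2$.
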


\begin{proof}
This follows from Corollary~\ref{10} again.
\end{proof}

Thus the study of Gabor frames for $L^2(\R)$ is equivalent to the study of frames of the form $\{W_{z_n}f\}$ 
in $F^2$. In particular, Gabor frames with the Gauss window correspond to frames $\{k_{z_n}\}$ in $F^2$ which 
in turn correspond to sampling sequences $\{z_n\}$ for $F^2$.

Sampling sequences have been completely characterized by Seip and Wallst\'en. To describe 
their results, we need to introduce a certain notion of density for sequences in the complex plane. Thus for a 
sequence $Z=\{z_n\}$ of distinct points in the complex plane, we define
$$D^+(Z)=\limsup_{R\to\infty}\sup_{z\in\C}\frac{|Z\cap B(z,R)|}{\pi R^2},$$
and
$$D^-(Z)=\liminf_{R\to\infty}\inf_{z\in\C}\frac{|Z\cap B(z,R)|}{\pi R^2},$$
where 
$$B(z,R)=\{w\in\C:|w-z|<R\}$$
and $|Z\cap B(z,R)|$ denotes the cardinality of $Z\cap B(z,R)$. These are called the Beurling upper and lower 
densities of $Z$, respectively. The following characterization of sampling sequences for $F^2$ can be found 
in \cite{Seip2, SW}.

\begin{thm}
A sequence $Z$ of distinct points in $\C$ is sampling for $F^2$ if and only if the following two conditions
are satisfied.
\begin{enumerate}
\item[(a)] $Z$ is the union of finitely many subsequences each of which is separated in the Euclidean metric.
\item[(b)] $Z$ contains a subsequence $Z'$ such that $D^-(Z')>1/\pi$.
\end{enumerate}
\label{15}
\end{thm}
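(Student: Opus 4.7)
The plan is to follow the classical Beurling--Seip--Wallst\'en pattern, splitting into necessity and sufficiency, and using throughout the Weyl-operator invariance from Theorem~\ref{14}: since each $W_a$ is unitary and $W_a k_z = e^{i\,\text{phase}}\,k_{z+a}$, the sampling property of $\{z_n\}$ depends only on the translation class of the sequence, and the inequality can be recast as a frame inequality for the normalized kernels by writing $|f(z_n)|^2 e^{-|z_n|^2} = |\langle f, k_{z_n}\rangle|^2$.

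For necessity of (a), the key is that $|\langle k_z, k_w\rangle| = e^{-|z-w|^2/2}$ is close to $1$ when $|z-w|$ is small. If (a) fails, then for every $N$ one can find a disk of some fixed radius containing $N$ points of $Z$; the Gram matrix of the corresponding kernels is then arbitrarily close to the rank-one all-ones matrix, hence has eigenvalues arbitrarily close to zero. Diagonalizing, one produces a unit vector $f \in F^2$ almost orthogonal to every $k_{z_n}$ in the cluster, which forces the sampling constant to blow up. A standard packing argument then shows $Z$ is a finite union of separated subsequences, with the number of pieces bounded in terms of the sampling constant $C$.

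For necessity of (b), I would invoke the compactness principle that the family of sampling sequences with fixed constants is closed under weak limits of translates $\{z_n - w_k\}$, $w_k \to \infty$, using Weyl-translation invariance and a normal-families argument on $F^2$. If $D^-(Z) \le 1/\pi$, one chooses the centers $w_k$ so that the weak limit $Z^*$ has a counting function in large disks that fails to exceed the critical density $1/\pi$. Then one constructs a non-zero $f \in F^2$ vanishing on $Z^*$ by taking a Weierstrass canonical product of genus two over $Z^*$; Jensen's formula applied to the counting function of $Z^*$ shows that the growth of this product is dominated by $e^{|z|^2/2}$, so $f$ lies in $F^2$. This exhibits $Z^*$ as a non-uniqueness set, contradicting the sampling property of $Z^*$ that was inherited from $Z$.

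For sufficiency, since enlarging a sampling sequence preserves sampling, it is enough to show that any separated $Z'$ with $D^-(Z') > 1/\pi$ is sampling. Pick a rectangular lattice $\Lambda = \alpha\Z + i\beta\Z$ with $\alpha\beta < \pi$, which is known to be sampling for $F^2$ (equivalently, by Theorem~\ref{13}, the Gaussian generates a Gabor frame at subcritical density, a classical fact). Because $D^-(Z') > D^-(\Lambda)$, for $R$ large enough every ball $B(\omega, R)$ with $\omega \in \Lambda$ contains strictly more points of $Z'$ than of $\Lambda$; one uses this local excess to approximate each kernel $k_\omega$, $\omega \in \Lambda$, as an $\ell^2$-linear combination of the kernels $\{k_{z_n} : z_n \in Z' \cap B(\omega, R)\}$ with uniform control, and then globalizes via a bounded partition of unity on $\C$ to transfer the sampling inequality from $\Lambda$ to $Z'$. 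The matching upper bound is the routine Plancherel--P\'olya estimate for separated sequences in $F^2$.

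The main obstacle is the construction of the vanishing entire function in the necessity of (b): getting a non-zero element of $F^2$ with prescribed zero set of density exactly $1/\pi$ requires a delicate Jensen-type balance between the counting function of $Z^*$ and the Fock weight $e^{-|z|^2}$, and this is precisely where the critical constant $1/\pi$ enters the theorem. Everything else is a bookkeeping exercise once this extremal construction is in hand.
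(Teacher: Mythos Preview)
The paper does not prove this theorem; it is simply quoted from \cite{Seip2, SW} with the remark ``The following characterization of sampling sequences for $F^2$ can be found in \cite{Seip2, SW}.'' There is therefore no argument in the paper to compare your sketch against.

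That said, two parts of your sketch have genuine gaps. For necessity of (a), your Gram-matrix argument targets the wrong inequality: producing a unit vector in the span of the clustered kernels that is nearly orthogonal to each of them only controls the sum over the cluster, not over all of $Z$, so it does not violate the lower sampling bound. The standard argument goes through the \emph{upper} bound: if arbitrarily many points of $Z$ fall in a unit ball, test with $f=k_w$ for $w$ at the center of the cluster; since $|\langle k_w,k_{z_n}\rangle|^2=e^{-|w-z_n|^2}$ is bounded below on the cluster, the Bessel sum is unbounded.

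For sufficiency, your appeal to ``a rectangular lattice $\Lambda=\alpha\Z+i\beta\Z$ with $\alpha\beta<\pi$, which is known to be sampling for $F^2$'' is circular within this paper: that fact is precisely Corollary~\ref{18}, which the paper derives \emph{from} Theorem~\ref{15} via Corollary~\ref{17}. Seip and Wallst\'en do not argue by transfer from a lattice; their proof builds a regularized Weierstrass product over $Z'$ and extracts the lower sampling inequality from its growth via a weighted $L^2$ estimate. Your step ``approximate each $k_\omega$, $\omega\in\Lambda$, by an $\ell^2$-combination of nearby $k_{z_n}$ with uniform control'' is exactly where the difficulty lies, and density alone does not supply the uniform bounds you need. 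Your outline of the necessity of (b) via weak limits of translates and Jensen's formula is, by contrast, on the right track and matches the Seip--Wallst\'en strategy.
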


As a consequence of Theorems \ref{13} and \ref{15} we obtain the following complete characterization of 
Gabor frames in $L^2(\R)$ associated with the Gauss window.

\begin{thm}
Let $Z=\{z_n=a_n-\pi ib_n:n\ge0\}$ be a sequence of points in $\C=\R^2$ and let $g(x)$ be the Gauss window. 
Then $\{M_{b_n}T_{a_n}g\}$ is a Gabor frame in $L^2(\R)$ if and only if the following two conditions are satisfied:
\begin{enumerate}
\item[(a)] $Z$ is the union of finitely many subsequences each of which is separated in the Euclidean metric.
\item[(b)] $Z$ contains a subsequence $Z'$ such that $D^-(Z')>1/\pi$.
\end{enumerate}
\label{16}
\end{thm}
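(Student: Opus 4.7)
The plan is to chain Theorem~\ref{13} with Theorem~\ref{15}. Theorem~\ref{13} establishes the equivalence between the Gabor frame condition for $\{M_{b_n}T_{a_n}g\}$ (with Gauss window $g$) and the sampling condition for the sequence $\{z_n=a_n-\pi ib_n\}$ in $F^2$; the Seip--Wallst\'en theorem (Theorem~\ref{15}) in turn characterizes sampling sequences in $F^2$ precisely by the two Beurling density conditions (a) and (b) stated here. Composing these two equivalences immediately yields the result.

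More explicitly, the first step is to apply Theorem~\ref{13} to transfer the problem from $L^2(\R)$ to $F^2$: the system $\{M_{b_n}T_{a_n}g\}$ is a Gabor frame for $L^2(\R)$ if and only if $Z=\{z_n\}$ is a sampling sequence for $F^2$. Unpacked via Corollary~\ref{10} and Corollary~\ref{2}, the reason is that $B$ intertwines $M_{b_n}T_{a_n}$ with a unimodular multiple of the Weyl operator $W_{z_n}$, while $Bg=1$; hence $B(M_{b_n}T_{a_n}g)$ is a unimodular multiple of $W_{z_n}1=k_{z_n}$, the normalized reproducing kernel of $F^2$ at $z_n$, and the frame inequalities for $\{g_n\}$ in $L^2(\R)$ transfer to those for $\{k_{z_n}\}$ in $F^2$, which by the identity $\langle f,k_z\rangle=f(z)e^{-|z|^2/2}$ are precisely the sampling inequalities. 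The second step is to invoke Theorem~\ref{15} to replace ``$Z$ is sampling for $F^2$'' by conditions (a) and (b).

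There is essentially no genuine obstacle at this stage of the paper, since both ingredients have already been assembled. The depth of the conclusion lies entirely inside Theorem~\ref{15}, cited from \cite{Seip2, SW}, whose proof is a substantial piece of complex analysis that I would treat as a black box. The only bookkeeping to verify is that the sequence $\{z_n\}$ appearing in Theorem~\ref{13} and the sequence $Z$ appearing in Theorem~\ref{15} refer to the same set of points, which is immediate from the definition $z_n=a_n-\pi ib_n$ and the assumption that these points are distinct.
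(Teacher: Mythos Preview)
Your proposal is correct and matches the paper's approach exactly: the paper presents Theorem~\ref{16} as an immediate consequence of Theorems~\ref{13} and~\ref{15}, with no further argument given. Your additional unpacking via Corollary~\ref{10}, Corollary~\ref{2}, and the identity $\langle f,k_z\rangle=f(z)e^{-|z|^2/2}$ is just an explicit recapitulation of what already went into Theorem~\ref{13}.
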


\begin{cor}
Let $g$ be the Gauss window and $Z=\{z_n=a_n-\pi ib_n:n\ge0\}$ be a sequence in $\C$ that is separated 
in the Euclidean metric. Then $\{M_{b_n}T_{a_n}g\}$ is a Gabor frame for $L^2(\R)$ if and only if $D^-(Z)>1/\pi$.
\label{17}
\end{cor}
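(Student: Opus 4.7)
The plan is to extract this as an immediate consequence of Theorem~\ref{16}, exploiting the fact that the separation hypothesis trivializes condition (a) and that the lower Beurling density $D^-$ is monotone with respect to inclusion of sequences.

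First I would handle the forward implication. Assume $\{M_{b_n}T_{a_n}g\}$ is a Gabor frame. By Theorem~\ref{16}, there exists a subsequence $Z'\subseteq Z$ with $D^-(Z')>1/\pi$. For every $z\in\C$ and every $R>0$ we have the trivial inclusion $Z'\cap B(z,R)\subseteq Z\cap B(z,R)$, so $|Z'\cap B(z,R)|\le|Z\cap B(z,R)|$. Dividing by $\pi R^2$, taking infima over $z$, and then the liminf in $R$, we obtain $D^-(Z')\le D^-(Z)$. Hence $D^-(Z)>1/\pi$.

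For the reverse direction, assume $Z$ is separated and $D^-(Z)>1/\pi$. Condition (a) of Theorem~\ref{16} is satisfied at once by presenting $Z$ as a single subsequence of itself, which is separated by hypothesis. Condition (b) is satisfied by taking $Z'=Z$. Theorem~\ref{16} then yields that $\{M_{b_n}T_{a_n}g\}$ is a Gabor frame for $L^2(\R)$.

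Since the two implications only require Theorem~\ref{16} and a one-line monotonicity observation about $D^-$, there is no genuine obstacle here; the only thing worth emphasizing in writing is the monotonicity $D^-(Z')\le D^-(Z)$ for $Z'\subseteq Z$, which is what lets the subsequence $Z'$ appearing in Theorem~\ref{16} be replaced by the whole sequence $Z$ in the presence of the separation assumption.
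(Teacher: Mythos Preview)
Your argument is correct and is exactly the intended one: the paper states Corollary~\ref{17} without proof because it follows immediately from Theorem~\ref{16}, with separation handling condition (a), the choice $Z'=Z$ handling condition (b), and the monotonicity $D^-(Z')\le D^-(Z)$ for $Z'\subseteq Z$ taking care of the forward implication.
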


The most classical case is when the sequence $Z$ is a rectangular lattice in $\C$, which is clearly separated in 
the Euclidean metric. Thus we have the following.

\begin{cor}
Let $g$ be the Gauss window and $a,b$ be positive constants. Then $\{M_{mb}T_{na}g:m,n\in\Z\}$ is a Gabor 
frame if and only if $ab<1$.
\label{18}
\end{cor}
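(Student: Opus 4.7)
The plan is to derive this as a direct specialization of Corollary~\ref{17}. With the rectangular index set $(m,n)\in\Z^2$ and spacings $a,b>0$, the corresponding sequence in $\C$ under the dictionary of Theorem~\ref{14}/Corollary~\ref{17} is
$$Z=\{z_{m,n}=na-\pi ibm:(m,n)\in\Z^2\},$$
which is a rectangular lattice with horizontal spacing $a$ and vertical spacing $\pi b$. So the whole task reduces to two points: (i) verify that $Z$ is Euclidean-separated, and (ii) compute $D^-(Z)$ and translate the inequality $D^-(Z)>1/\pi$ into an inequality on $a,b$.

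First I would observe that since $a,b>0$, any two distinct points of $Z$ differ by at least $\min(a,\pi b)>0$, so $Z$ is separated in the Euclidean metric. Thus Corollary~\ref{17} applies, and the Gabor system is a frame if and only if $D^-(Z)>1/\pi$.

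Next I would compute $D^-(Z)$. For a rectangular lattice with cell sides $a$ and $\pi b$, the number of points in a disk $B(z,R)$ is, up to a boundary term of order $R$, the area of the disk divided by the area $a\cdot\pi b$ of a fundamental cell. Hence
$$D^-(Z)=\lim_{R\to\infty}\frac{\pi R^2/(a\pi b)}{\pi R^2}=\frac{1}{\pi ab},$$
and by the uniform, translation-invariant nature of the lattice, the lim inf equals the lim sup and both agree with this value uniformly in the center $z$. The condition $D^-(Z)>1/\pi$ becomes $1/(\pi ab)>1/\pi$, i.e. $ab<1$, giving the stated equivalence.

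The main (and essentially only) obstacle is the density computation, which is a counting/geometry estimate: one must carefully bound the number of lattice points in a disk uniformly in the center and show that the boundary contribution is negligible compared to $\pi R^2$. This is standard but is the single step that needs genuine verification; everything else is bookkeeping through the dictionary established in Corollary~\ref{10}, Corollary~\ref{12}, Theorem~\ref{13} and Corollary~\ref{17}.
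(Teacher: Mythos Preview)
Your proposal is correct and follows essentially the same approach as the paper: apply Corollary~\ref{17} to the rectangular lattice $Z=\{na-\pi ibm\}$, compute $D^-(Z)=1/(\pi ab)$ by a lattice-point counting argument, and read off the condition $ab<1$. If anything, your write-up is more thorough, since you explicitly verify Euclidean separation and flag the boundary estimate, whereas the paper simply asserts the density computation as ``easy to see.''
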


\begin{proof}
When $R$ is very large, it is easy to see that $|Z\cap B(z,R)|$ is roughly $\pi R^2/(\pi ab)$. It follows that 
$$D^-(Z)=D^+(Z)=1/(\pi ab).$$
Therefore, according to Corollary~\ref{17}, $\{M_{mb}T_{na}g:m,n\in\Z\}$ is a Gabor frame if and only if 
$1/(\pi ab)>1/\pi$, or $ab<1$.
\end{proof}

It is interesting to observe that, for the Gauss window, whether or not a rectangular lattice $Z=\{na+mib:n,m\in\Z\}$ 
generates a Gabor frame only depends on the product $ab$. This was already a well-known fact (and a conjecture
for a long time) before Seip and Wallst\'en gave a complete characterization of sampling sequences for the Fock space. 
The Bargmann transform allows us to obtain a complete characterization of (regular AND irregular) Gabor frames 
corresponding to the Gauss window.

In addition to the Gauss window, we also mentioned the box window $g(x)=\chi_{[0,1)}(x)$ earlier. In this case,
the Gabor frame
$$g_{mn}(x)=\chi_{[n,n+1)}(x)e^{2m\pi ix},\qquad (m,n)\in\Z^2,$$
is actually an orthogonal basis for $L^2(\R)$. Via the Bargmann transform, we have
$$f(z)=Bg(z)=c\int_0^1e^{2xz-x^2-(z^2/2)}\,dx,$$
and the corresponding orthogonal basis for $F^2$ is given by
$$f_{mn}(z)=W_{z_{mn}}f(z)=f(z-z_{mn})k_{z_{mn}}(z)=f(z-z_{mn})e^{z\overline z_{mn}-\frac{|z_{mn}|^2}2},$$
where $z_{mn}=n-m\pi i$ for $(m,n)\in\Z^2$. Although the box window and the associated orthogonal basis
$\{g_{mn}\}$ are very natural in Gabor analysis, their counterparts in the Fock space have not yet been studied.

\section{The canonical commutation relation}

There are two unbounded operators that are very important in the study of $L^2(\R)$, namely, the operator of 
multiplication by $x$ and the operator of differentiation. Thus for this section we write
$$Tf(x)=xf(x),\qquad Sf(x)=f'(x),\qquad f\in L^2(\R).$$
It is clear that both of them are unbounded but densely defined. We will identify the operators on
$F^2$ that correspond to these two operators under the Bargmann transform.

\begin{thm}
For $f\in F^2$ we have
$$BTB^{-1}f(z)=\frac12\left[zf(z)+f'(z)\right].$$
\label{19}
\end{thm}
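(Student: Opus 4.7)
The plan is to follow the same direct integral template used in the proofs of Theorems~\ref{8} and \ref{9}. As in the proof of Theorem~\ref{3}, I restrict attention to polynomials $f\in F^2$: under $B^{-1}$ these become Hermite polynomials times the Gauss function, so multiplying by $x$ preserves rapid decay and $B(TB^{-1}f)$ makes sense with Fubini's theorem freely applicable. Since $T$ is unbounded, this is also the right setting for a clean statement, the identity extending to the natural domain by the density of polynomials.

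First I would write
$$B(TB^{-1}f)(z)=c^2 e^{-z^2/2}\inr x\,e^{2xz-x^2}\,dx\inc f(w)e^{2x\overline w-x^2-\overline w^2/2}\,d\lambda(w),$$
swap the order of integration, and evaluate the inner Gaussian moment
$$\inr x\,e^{-2x^2+2x(z+\overline w)}\,dx$$
by completing the square. A routine computation gives $\tfrac12(z+\overline w)\sqrt{\pi/2}\,e^{(z+\overline w)^2/2}$. The three exponentials $e^{-z^2/2}$, $e^{-\overline w^2/2}$, and $e^{(z+\overline w)^2/2}$ then combine cleanly into $e^{z\overline w}$, while the constants collapse via $c^2\sqrt{\pi/2}=1$, leaving
$$B(TB^{-1}f)(z)=\frac12\inc f(w)(z+\overline w)e^{z\overline w}\,d\lambda(w).$$

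The final step is to split this integral and identify each piece through the reproducing kernel $K(z,w)=e^{z\overline w}$ of $F^2$. The term $\tfrac{z}{2}\int_\C f(w)e^{z\overline w}\,d\lambda(w)$ equals $\tfrac12 zf(z)$ by the reproducing property; differentiating the identity $f(z)=\int_\C f(w)e^{z\overline w}\,d\lambda(w)$ with respect to $z$ (justified by uniform convergence on compact sets) brings down a factor of $\overline w$ inside the integral, so the remaining term equals $\tfrac12 f'(z)$. Adding the two pieces yields $\tfrac12[zf(z)+f'(z)]$, as desired.

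The only real obstacle is bookkeeping with the constants $c$, $c^2$, and $\sqrt{\pi/2}$; conceptually nothing new is needed beyond the reproducing property and a completion of the square. As a sanity check, one can also verify the formula on the orthonormal basis: the classical three-term recurrence $xh_n=\tfrac12(\sqrt{n+1}\,h_{n+1}+\sqrt{n}\,h_{n-1})$ combined with Theorem~\ref{1} gives $BTB^{-1}e_n=\tfrac12(\sqrt{n+1}\,e_{n+1}+\sqrt{n}\,e_{n-1})$, which agrees with $\tfrac12(ze_n+e_n')$ since $ze_n=\sqrt{n+1}\,e_{n+1}$ and $e_n'=\sqrt{n}\,e_{n-1}$.
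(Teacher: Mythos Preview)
Your proposal is correct and follows essentially the same route as the paper: restrict to polynomials, write $BTB^{-1}f$ as a double integral, swap the order, evaluate the inner Gaussian moment by completing the square to arrive at $\tfrac12\int_\C f(w)(z+\overline w)e^{z\overline w}\,d\lambda(w)$, and then split via the reproducing property. The only differences are cosmetic (the paper substitutes $x\mapsto x/\sqrt2$ before completing the square), and your closing sanity check on the basis $\{e_n\}$ is a nice extra that the paper does not include.
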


\begin{proof}
Let $c=(2/\pi)^{1/4}$ and $f\in F^2$ (a polynomial for the sake of using Fubini's theorem). We have
$$TB^{-1}f(x)=cx\inc f(w)e^{2x\overline w-x^2-\frac{\overline w^2}2}\,d\lambda(w),$$
and so
\begin{eqnarray*}
BTB^{-1}f(z)&=&c^2\inr xe^{2xz-x^2-(z^2/2)}\,dx\inc f(w)e^{2x\overline w-x^2-(\overline w^2/2)}\,d\lambda(w)\\
&=&c^2e^{-z^2/2}\inc f(w)e^{-\overline w^2/2}d\lambda(w)\inr xe^{-2x^2+2x(z+\overline w)}\,dx\\
&=&\frac{c^2}2e^{-\frac{z^2}2}\inc f(w)e^{-\frac{\overline w^2}2}\,d\lambda(w)\inr xe^{-x^2+2x(z+\overline w)/\sqrt2}\,dx\\
&=&\frac{c^2}2\inc f(w)e^{z\overline w}\,d\lambda(w)\inr xe^{-[x-(z+\overline w)/\sqrt2]^2}\,dx\\
&=&\frac{c^2}2\inc f(w)e^{z\overline w}\,d\lambda(w)\inr\left(x+\frac{z+\overline w}{\sqrt2}\right)e^{-x^2}\,dx\\
&=&\frac12\inc f(w)(z+\overline w)e^{z\overline w}\,d\lambda(w)\\
&=&\frac12\left[zf(z)+\frac d{dz}\inc f(w)e^{z\overline w}\,d\lambda(w)\right]\\
&=&\frac12[zf(z)+f'(z)].
\end{eqnarray*}
This proves the desired result.
\end{proof}

\begin{thm}
For $f\in F^2$ we have
$$BSB^{-1}f(z)=f'(z)-zf(z).$$
\label{20}
\end{thm}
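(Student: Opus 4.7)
The plan is to mimic the proof of Theorem \ref{19} almost verbatim, since the only change is that multiplication by $x$ gets replaced by differentiation in $x$. I would start with a polynomial $f\in F^2$ (dense in $F^2$, and $B^{-1}f$ is a Hermite function times a Gaussian, so all Fubini/differentiation-under-the-integral manipulations are legitimate) and write
$$SB^{-1}f(x)=c\,\frac{d}{dx}\inc f(w)e^{2x\overline w-x^2-\frac{\overline w^2}2}\,d\lambda(w)
=c\inc f(w)(2\overline w-2x)e^{2x\overline w-x^2-\frac{\overline w^2}2}\,d\lambda(w),$$
differentiating under the integral sign.

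Next I would apply $B$, swap the order of integration, and collect all $x$-dependent factors into a single Gaussian integral of the form
$$\inr(2\overline w-2x)e^{-2x^2+2x(z+\overline w)}\,dx.$$
The routine step here is to complete the square in $x$ and substitute $u=x-(z+\overline w)/2$; the odd part in $u$ integrates to zero, leaving $(\overline w-z)\sqrt{\pi/2}\,e^{(z+\overline w)^2/2}$. After absorbing the constants (note $c^2\sqrt{\pi/2}=1$) and simplifying the Gaussian exponents using $-\overline w^2/2+(z+\overline w)^2/2=z^2/2+z\overline w$, everything collapses to
$$BSB^{-1}f(z)=\inc f(w)(\overline w-z)e^{z\overline w}\,d\lambda(w).$$

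Finally I would split this last integral into two pieces. The piece with $-z$ equals $-zf(z)$ by the reproducing property $f(z)=\inc f(w)e^{z\overline w}\,d\lambda(w)$, while the piece with $\overline w$ is recognized as $\frac{d}{dz}$ of the reproducing integral, hence equals $f'(z)$. Combining gives $BSB^{-1}f(z)=f'(z)-zf(z)$, and density of polynomials in $F^2$ (with both $BSB^{-1}$ and $D-M_z$ being densely defined and closable) finishes the identification.

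I do not expect any genuine obstacle; the calculation is parallel to Theorem \ref{19}, with the only bookkeeping point being that the extra factor $(2\overline w-2x)$ produces both a multiplication term and a differentiation term on the Fock side (in contrast to the symmetric $\frac12(zf+f')$ coming from multiplication by $x$). The mild care needed is just keeping the signs straight when completing the square and when the $(\overline w-z)$ splits into the multiplication-by-$z$ part and the derivative part. As a sanity check, one can verify the canonical commutation relation: $BSB^{-1}$ and $BTB^{-1}$ should satisfy $[BSB^{-1},BTB^{-1}]=I$, and indeed $[D-M_z,\tfrac12(M_z+D)]=\tfrac12[D,M_z]-\tfrac12[M_z,D]=I$, confirming the formula.
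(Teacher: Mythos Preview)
Your proposal is correct and follows essentially the same Gaussian-integral computation as the paper. The only organizational difference is that the paper splits $SB^{-1}f(x)$ as $2c\int_\C\overline w f(w)\cdots\,d\lambda(w)-2xB^{-1}f(x)$ and invokes Theorem~\ref{19} for the second term, whereas you keep the factor $(2\overline w-2x)$ intact through the $x$-integral and split into $\overline w$ and $-z$ only after reaching the Fock side; both routes arrive at $\int_\C f(w)(\overline w-z)e^{z\overline w}\,d\lambda(w)=f'(z)-zf(z)$.
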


\begin{proof}
Let $f\in F^2$ and $c=(2/\pi)^{1/4}$ again. We have
\begin{eqnarray*}
SB^{-1}f(x)&=&c\inc f(w)(2\overline w-2x)e^{2x\overline w-x^2-(\overline w^2/2)}\,d\lambda(w)\\
&=&2c\inc\overline wf(w)e^{2x\overline w-x^2-(\overline w^2/2)}\,d\lambda(w)-2xB^{-1}f(x).
\end{eqnarray*}
Let $g(x)$ denote the function defined by $2c$ times the integral above. Then
\begin{eqnarray*}
Bg(z)&=&2c^2\inr e^{2xz-x^2-(z^2/2)}\,dx\inc\overline wf(w)e^{2x\overline w-x^2-(\overline w^2/2)}\,d\lambda(w)\\
&=&2c^2e^{-\frac{z^2}2}\inc\overline wf(w)e^{-\frac{\overline w^2}2}\,d\lambda(w)\inr e^{-2x^2+2x(z+\overline w)}\,dx\\
&=&\sqrt2c^2e^{-\frac{z^2}2}\inc\overline wf(w)e^{-\frac{\overline w^2}2}\,d\lambda(w)
\inr e^{-x^2+2x(z+\overline w)/\sqrt2}\,dx\\
&=&\sqrt2c^2\inc\overline wf(w)e^{z\overline w}\,d\lambda(w)\inr e^{-[x-(z+\overline w)/\sqrt2]^2}\,dx\\
&=&\sqrt{2\pi}c^2\inc\overline wf(w)e^{z\overline w}\,d\lambda(w)\\
&=&2f'(z).
\end{eqnarray*}
It follows that
$$BSB^{-1}f(z)=2f'(z)-(zf(z)+f'(z))=f'(z)-zf(z).$$
This proves the desired result.
\end{proof}

Let $A_1=BTB^{-1}$ and $A_2=BSB^{-1}$. Thus
$$A_1f(z)=\frac12[f'(z)+zf(z)],\qquad A_2f(z)=f'(z)-zf(z).$$
It is easy to verify that
$$(A_2A_1-A_1A_2)f=f,\qquad f\in F^2.$$
This gives the classical commutation relation as follows.

\begin{cor}
On the space $L^2(\R)$ we have $[S,T]=I$, and on the space $F^2$ we have $[A_2,A_1]=I$. Here $I$
denotes the identity operator on the respective spaces.
\label{21}
\end{cor}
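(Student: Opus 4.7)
The plan is to verify both commutation relations by direct computation, and then observe that either one implies the other automatically by unitary equivalence.

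First I would check $[S,T] = I$ on the dense domain of smooth rapidly decreasing functions in $L^2(\R)$. This is a one-line application of the product rule: $(ST)f(x) = (xf(x))' = f(x) + xf'(x)$ while $(TS)f(x) = xf'(x)$, so $(ST - TS)f = f$.

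Next I would verify $[A_2, A_1] = I$ on $F^2$ by applying Theorems~\ref{19} and~\ref{20} twice. Computing $A_2 A_1 f$ requires applying $A_2$ to $\tfrac12(f' + zf)$, which yields $\tfrac12\bigl[(f'+zf)' - z(f'+zf)\bigr] = \tfrac12\bigl[f'' + f + zf' - zf' - z^2 f\bigr] = \tfrac12\bigl[f'' + f - z^2 f\bigr]$. Similarly $A_1 A_2 f = \tfrac12\bigl[(f'-zf)' + z(f'-zf)\bigr] = \tfrac12\bigl[f'' - f - z^2 f\bigr]$. Subtracting gives $f$, as desired.

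Finally I would point out that the two identities are in fact equivalent statements: since $B$ is unitary and $A_1 = BTB^{-1}$, $A_2 = BSB^{-1}$, we have $[A_2, A_1] = B[S,T]B^{-1}$, so one equation is obtained from the other by conjugation. This makes the corollary essentially a formal consequence of the preceding two theorems together with the elementary product-rule computation.

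There is no real obstacle here; the only minor point of care is the domain question, since $S$ and $T$ are unbounded. I would restrict attention to the Schwartz class on the $L^2(\R)$ side (correspondingly to polynomials in $F^2$, which form an invariant core for $A_1$ and $A_2$), where all manipulations are rigorously justified, and note that the commutator identity extends to the natural maximal domain by standard density arguments.
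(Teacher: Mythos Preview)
Your proposal is correct and matches the paper's approach: the paper simply remarks that ``it is easy to verify that $(A_2A_1-A_1A_2)f=f$'' and states the corollary without further proof, which is exactly the direct computation you carry out (with the added observation that the $L^2(\R)$ and $F^2$ identities are equivalent under conjugation by $B$). Your treatment is more detailed than the paper's, but there is no substantive difference in method.
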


Note that if we simply define
$$Mf(z)=zf(z),\qquad Df(z)=f'(z),\qquad f\in F^2,$$
then we also have 
$$DM-MD=[D,M]=I.$$
Thus the following problem becomes interesting in functional analysis: characterize all operator pairs (potentially 
unbounded) on a Hilbert space such that their commutator is equal to the identity operator.

\section{Uncertainty principles}

With our normalization of the Fourier transform, namely,
$$\widehat f(x)=\frac1{\sqrt\pi}\inr f(t)e^{2ixt}\,dt,$$
the classical uncertainty principle in Fourier analysis takes the following form:
$$\frac14\|f\|^2\le\|(x-a)f\|\|(x-b)\widehat f\|,$$
where the norm is taken in $L^2(\R)$, and $a$ and $b$ are arbitrary real numbers. See \cite{F2, G1}
for this and for exactly when equality holds.

Thus the corresponding result for the Fock space is
$$\frac14\|f\|^2\le\|(T-a)f\|\|(T-b)\widehat f\|,$$
where the norm is taken in the Fock space, the operator $T$ (unitarily equivalent to
the operator of multiplication by $x$ on $L^2(\R)$) is given by
$$Tf(z)=\frac12(f'(z)+zf(z)),$$
and $a$ and $b$ are arbitrary real numbers. Recall from Theorem~\ref{3} that in the context of the Fock space 
we have $\widehat f(z)=f(iz)$. Therefore, the corresponding uncertainty principle for the Fock space is
$$\|f'(z)+zf(z)-af(z)\|\|if'(iz)+zf(iz)-bf(iz)\|\ge\|f\|^2,$$
where $a$ and $b$ are arbitrary real numbers. Rewrite this as
$$\|f'(z)+zf(z)-af(z)\|\|f'(iz)-izf(iz)+bif(iz)\|\ge\|f\|^2.$$
Since the norm in $F^2$ is rotation invariant, we can rewrite the above as
$$\|f'+zf-af\|\|f'-zf+bif\|\ge\|f\|^2.$$
Changing $b$ to $-b$, we obtain the following version of the uncertainty principle for the Fock space.

\begin{thm}
Suppose $a$ and $b$ are real constants. Then
$$\|f'+zf-af\|\|f'-zf-bif\|\ge\|f\|^2$$
for all $f\in F^2$ (with the understanding that the left-hand side may be infinite). Moreover, equality holds if and only if
$$f(z)=C\exp\left(\frac{c-1}{2(c+1)}z^2+\frac{a+ibc}{c+1}z\right),$$
where $C$ is any complex constant and $c$ is any positive constant.
\label{22}
\end{thm}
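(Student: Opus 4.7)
The plan is to give a self-contained proof inside $F^2$ via a Heisenberg-type argument, which bypasses the Bargmann transfer from $L^2(\R)$ sketched just before the theorem: that route already yields the inequality, but it makes the equality case awkward because the classical Gaussian extremizers on $\R$ must then be pushed through $B$ by hand. Writing $D$ for $f\mapsto f'$ and $M$ for $f\mapsto zf$, Theorems \ref{19} and \ref{20} identify the two factors appearing in the claimed inequality with the shifted operators $(P-a)f$ and $(Q-bi)f$, where $P:=D+M$ and $Q:=D-M$. So the task reduces to operator algebra on $F^2$.

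The first step is to record the structural facts: integration by parts against the Gaussian measure on $\C$ gives $D^*=M$, so $P$ is self-adjoint and $Q$ is skew-adjoint, whence $iQ$ is self-adjoint; and Corollary \ref{21}, applied to $A_1=P/2$ and $A_2=Q$, rewrites as $[P,iQ]=-2iI$. The standard Cauchy--Schwarz--plus--commutator argument for a self-adjoint pair then delivers, for any real $a,\beta$,
$$\|(P-a)f\|\,\|(iQ-\beta)f\|\;\ge\;\tfrac12\bigl|\langle [P,iQ]f,f\rangle\bigr|\;=\;\|f\|^2.$$
Because multiplication by $i$ is an $F^2$-isometry, $\|(iQ-\beta)f\|=\|(Q+i\beta)f\|$, and choosing $\beta=-b$ converts this into the theorem's inequality $\|f'+zf-af\|\,\|f'-zf-bif\|\ge\|f\|^2$.

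For the equality case I would track the two tightness conditions of this argument: Cauchy--Schwarz equality $(P-a)f=\gamma(iQ-\beta)f$ for some $\gamma\in\C$, plus $\gamma$ purely imaginary so that the inner product $\gamma\|(iQ-\beta)f\|^2$ has no real part. Writing $\gamma=ic$ with $c\in\R$, substituting and collecting terms yields the separable first-order ODE
$$(c+1)\,f'(z)\;=\;\bigl[(c-1)z+(a+ibc)\bigr]f(z),$$
whose general solution is exactly $f(z)=C\exp\!\bigl(\tfrac{c-1}{2(c+1)}z^2+\tfrac{a+ibc}{c+1}z\bigr)$. A short computation writing $z=x+iy$ shows that $|f|^2e^{-|z|^2}$ is integrable over $\C$ iff $\bigl|\tfrac{c-1}{c+1}\bigr|<1$, i.e.\ iff $c>0$; and every $f$ of this form manifestly saturates both tightness conditions, so these are exactly the extremizers.

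The main care is needed in the Heisenberg bookkeeping --- verifying $P^*=P$, $Q^*=-Q$ and the sign of $[P,iQ]$, and correctly matching the free real parameter of the ODE to the positive $c$ in the theorem's formula. The rest is routine separable ODE and a real-Gaussian integrability check.
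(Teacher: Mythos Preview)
Your argument is correct and is essentially the same as the paper's own proof. The paper also works directly in $F^2$: it sets $S_1f=f'+zf$ and $S_2f=i(f'-zf)$ (your $P$ and $iQ$), checks $[S_1,S_2]=-2iI$, invokes the standard Heisenberg inequality $\|(S_1-a)f\|\,\|(S_2-b)f\|\ge\tfrac12|\langle[S_1,S_2]f,f\rangle|$, and for equality solves the same first-order ODE you wrote down. The only cosmetic differences are that you justify self-adjointness via $D^*=M$ and cite Corollary~\ref{21} for the commutator (the paper simply asserts both), and that you obtain $c>0$ in one step from the Gaussian integrability condition $\bigl|\tfrac{c-1}{c+1}\bigr|<1$, whereas the paper uses the pointwise growth bound to get $c\ge0$ and then rules out $c=0$ separately.
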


\begin{proof}
We only need to figure out exactly when equality occurs. This can be done with the help of the Bargmann
transform and the known condition about when equality occurs in the classical version of the uncertainty
principle in Fourier analysis. However, it is actually easier to do it directly in the context of the Fock space.

More specifically, we consider the following two self-adjoint operators on $F^2$:
$$S_1f=f'+zf,\qquad S_2f=i(f'-zf),\qquad f\in F^2.$$
It is easy to check that $S_1$ and $S_2$ satisfy the commutation relation
$$[S_1,S_2]=S_1S_2-S_2S_1=-2iI.$$
By the well-known functional analysis result on which the uncertainty principle is usually based (see 
\cite{F2, G1} for example), we have
$$\|(S_1-a)f\|\|(S_2-b)f\|\ge\frac12|\langle[S_1,S_2]f,f\rangle|=\|f\|^2$$
for all $f\in F^2$ and all real constants $a$ and $b$. Moreover, equality holds if and only if $(S_1-a)f$ and
$(S_2-b)f$ are purely imaginary scalar multiples of one another. It follows that
$$\|f'+zf-af\|\|f'-zf-ibf\|\ge\|f\|^2,$$
with equality if and only if
\begin{equation}
f'+zf-af=ic[i(f'-zf)+bf],
\label{eq1}
\end{equation}
or 
\begin{equation}
i(f'-zf)+bf=ic[f'+zf-af],
\label{eq2}
\end{equation}
where $c$ is a real constant. In the first case, we can rewrite the equality condition as
$$(1+c)f'+[(1-c)z-(a+ibc)]f=0.$$
If $c=-1$, the only solution is $f=0$, which can be written in the form (\ref{eq3}) below with $C=0$ 
and arbitrary positive $c$. If $c\not=-1$, then it is elementary to solve the first order linear ODE to get
\begin{equation}
f(z)=C\exp\left(\frac{c-1}{2(c+1)}z^2+\frac{a+ibc}{c+1}z\right),
\label{eq3}
\end{equation}
where $C$ is any complex constant. It is well known that every function $f\in F^2$ must satisfy 
the growth condition
\begin{equation}
\lim_{z\to\infty}f(z)e^{-|z|^2/2}=0.
\label{eq4}
\end{equation}
See page 38 of \cite{Z2} for example. Therefore, a necessary condition for the function in (\ref{eq3})
to be in $F^2$ is $C=0$ or $|c-1|\le|c+1|$. Since $c$ is real, we must have either $C=0$ or $c\ge0$. 
When $c=0$, the function in (\ref{eq3}) becomes
$$f(z)=C\exp\left(-\frac12z^2+az\right),$$
which together with (\ref{eq4}) forces $C=0$. The case of (\ref{eq2}) is dealt with in a similar manner. 
This completes the proof of the theorem.
\end{proof}

The result above was published in Chinese in \cite{CZ}, where several other versions of the uncertainty
principle were also obtained. We included some details here for the convenience of those readers who
are not familiar with Chinese.

\section{The Hilbert transform}

The Hilbert transform is the operator on $L^2(\R)$ defined by
$$Hf(x)=\frac1\pi\inr\frac{f(t)\,dt}{t-x},$$
where the improper integral is taken in the sense of ``principal value". This is a typical ``singular integral
operator".

The Hilbert transform is one of the most studied objects in harmonic analysis. It is well known that $H$ is
a bounded linear operator on $L^p(\R)$ for every $1<p<\infty$, and it is actually a unitary operator on
$L^2(\R)$. See \cite{F2} for example.

In order to identify the corresponding operator on the Fock space, we need the entire function
$$A(z)=\int_0^z e^{u^2}\,du,\qquad z\in\C,$$
which is the antiderivative of $e^{z^2}$ satisfying $A(0)=0$. We will also need the following

\begin{lem}
We have
$$\inr e^{-(x-z)^2}\,dx=\sqrt\pi$$
for every complex number $z$.
\label{23}
\end{lem}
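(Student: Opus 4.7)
\medskip

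\noindent\textbf{Proof proposal.} For real $z$, the substitution $y=x-z$ reduces the claim to the classical Gaussian integral $\int_\R e^{-y^2}\,dy=\sqrt\pi$. The task is therefore to extend this identity from real $z$ to all complex $z$. The plan is to define
$$g(z)=\inr e^{-(x-z)^2}\,dx$$
and show that $g$ is a well-defined entire function on $\C$; since $g\equiv\sqrt\pi$ on $\R$, the identity theorem will then give $g\equiv\sqrt\pi$ on $\C$.

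The first step is a size estimate. Writing $z=a+ib$, one computes
$$|e^{-(x-z)^2}|=e^{-\re((x-z)^2)}=e^{-(x-a)^2+b^2},$$
so the integrand decays like $e^{-x^2}$ uniformly for $z$ in any compact subset $K\subset\C$, which gives absolute convergence and provides a dominating integrable majorant $C_K\,e^{-x^2/2}$ for all $z\in K$. The second step is to show $g$ is holomorphic; since the integrand is entire in $z$ for each fixed $x$ and the partial derivative $\partial_z e^{-(x-z)^2}=2(x-z)e^{-(x-z)^2}$ admits a similar integrable majorant on compact sets, standard differentiation under the integral sign (or equivalently Morera combined with Fubini on a triangular contour) yields that $g$ is entire.

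The third step invokes the identity theorem: $g$ and the constant function $\sqrt\pi$ are two entire functions on $\C$ that agree on $\R$, hence they agree everywhere. An alternative finishing step, which avoids the identity theorem, is to shift the contour directly: for $z=a+ib$ with $b\ne0$, apply Cauchy's theorem to $e^{-w^2}$ on the rectangle with vertices $\pm R$ and $\pm R-ib$, translated by $a$. The vertical sides contribute $O(e^{-R^2})$ as $R\to\infty$ because $|e^{-(\pm R+it)^2}|=e^{-R^2+t^2}$ and $t$ ranges over a bounded interval, so in the limit the integral along $\R-z$ equals the integral along $\R$, which is $\sqrt\pi$.

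There is no serious obstacle here; the only point that requires a moment's care is verifying the uniform-in-$z$ Gaussian majorant so that Leibniz's rule (or the contour estimate) applies, but this is immediate from the identity $\re((x-z)^2)=(x-a)^2-b^2$ written out above.
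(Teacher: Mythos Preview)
Your proof is correct. Both you and the paper first argue that the integral defines an entire function $g(z)$, but you then finish differently: you invoke the identity theorem (or a contour shift), using that $g\equiv\sqrt\pi$ on $\R$, whereas the paper computes $g'(z)$ directly and observes that the integrand $2(x-z)e^{-(x-z)^2}$ is the $x$-derivative of $-e^{-(x-z)^2}$, so $g'(z)$ is a boundary term that vanishes, forcing $g$ to be constant and equal to $g(0)=\sqrt\pi$. The paper's route is a touch shorter since it bypasses the identity theorem and the contour estimate entirely; your route has the virtue of being the standard analytic-continuation template and of making the uniform Gaussian majorant explicit, which the paper's phrase ``clearly an entire function'' leaves to the reader.
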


\begin{proof}
Let $I(z)$ denote the integral in question. Then $I(z)$ is clearly an entire function. Since
$$I'(z)=2\inr(x-z)e^{-(x-z)^2}\,dx=\left.e^{-(x-z)^2}\right|^{+\infty}_{-\infty}=0,$$
we must have
$$I(z)=I(0)=\inr e^{-x^2}\,dx=\sqrt\pi$$
for all $z\in\C$.
\end{proof}

Given a function $f\in F^2$ (we may start out with a polynomial in order to justify the use of Fubini's theorem), we have
\begin{eqnarray*}
HB^{-1}f(x)&=&\frac1\pi\inr\frac{B^{-1}f(t)\,dt}{t-x}\\
&=&\frac{\ccc}{\pi}\inr\frac{dt}{t-x}\inc f(z)e^{2t\overline z-t^2-(\overline z^2/2)}\,d\lambda(z)\\
&=&\frac{\ccc}{\pi}\inc f(z)e^{-\frac{\overline z^2}2}\,d\lambda(z)\inr\frac{e^{2t\overline z-t^2}}{t-x}\,dt\\
&=&\frac{\ccc}{\pi}\inc f(z)e^{2x\overline z-x^2-\frac{\overline z^2}2}\,d\lambda(z)\inr\frac{e^{-t^2+2t(\overline z-x)}}t\,dt.
\end{eqnarray*}
Let us consider the entire function
$$h(u)=\inr\frac{e^{-t^2+2tu}}{t}\,dt.$$
We can rewrite this PV-integral in the form of an ordinary integral as follows:
$$h(u)=\inr\frac{e^{-t^2}(e^{2tu}-1)}t\,dt.$$
The singularity at $t=0$ and the singularity at infinity are both gone. Thus we can differentiate inside
the integral sign to get
$$h'(u)=2\inr e^{-t^2+2tu}\,dt=2e^{u^2}\inr e^{-(t-u)^2}\,dt=2\sqrt\pi e^{u^2}.$$
Since $h(0)=0$, we must have $h(u)=2\sqrt\pi A(u)$. Thus
$$HB^{-1}f(x)=\frac{2c}{\sqrt\pi}\inc f(w)e^{2x\overline w-x^2-\frac{\overline w^2}2}A(\overline w-x)\,d\lambda(w).$$
Therefore,
\begin{eqnarray*}
&&BHB^{-1}f(z)\\
&=&\frac{2\sqrt2}\pi\inr e^{2xz-x^2-\frac{z^2}2}\,dx
\inc f(w)e^{2x\overline w-x^2-\frac{\overline w^2}2}A(\overline w-x)\,d\lambda(w)\\
&=&\frac{2\sqrt2}\pi e^{-\frac{z^2}2}\inc f(w)e^{-\frac{\overline w^2}2}\,d\lambda(w)
\inr e^{-2x^2+2x(z+\overline w)}A(\overline w-x)\,dx.
\end{eqnarray*}

Fix $w$ and consider the entire function
$$J(z)=\inr e^{-2x^2+2x(\overline w+z)}A(\overline w-x)\,dx.$$
We have
\begin{eqnarray*}
J'(z)&=&2\inr xe^{-2x^2+2x(\overline w+z)}A(\overline w-x)\,dx\\
&=&\inr[2x-(\overline w+z)+(\overline w+z)]e^{-2x^2+2x(\overline w+z)}A(\overline w-x)\,dx\\
&=&-\frac12\inr A(\overline w-x)de^{-2x^2+2x(\overline w+z)}+\\
&&\ \ +(\overline w+z)\inr e^{-2x^2+2x(\overline w+z)}A(\overline w-x)\,dx\\
&=&-\frac12\inr e^{-2x^2+2x(\overline w+z)}e^{(\overline w-x)^2}\,dx+(\overline w+z)J(z)\\
&=&-\frac12e^{\overline w^2+z^2}\inr e^{-(x-z)^2}\,dx+(\overline w+z)J(z)\\
&=&-\frac{\sqrt\pi}2e^{\overline w^2+z^2}+(\overline w+z)J(z).
\end{eqnarray*}
We can rewrite this in the following form:
$$\frac{d}{dz}\left[J(z)e^{-\frac12(\overline w+z)^2}\right]=-\frac{\sqrt\pi}2e^{\frac12(\overline w-z)^2}.$$
It follows that
$$J(z)e^{-\frac12(\overline w+z)^2}=-\sqrt{\frac\pi2}A\left(\frac{z-\overline w}{\sqrt2}\right)+C(w).$$
We are going to show that $C(w)=0$. To this end, let $z=-\overline w$ in the identity above. We obtain
$$C(w)=J(-\overline w)+\sqrt{\frac\pi2}A(-\sqrt2\,\overline w).$$
Let 
$$F(\overline w)=J(-\overline w)=\inr e^{-2x^2}A(\overline w-x)\,dx,$$
or
$$F(u)=\inr e^{-2x^2}A(u-x)\,dx,\qquad u\in\C.$$
We have
\begin{eqnarray*}
F'(u)&=&\inr e^{-2x^2}e^{(u-x)^2}\,dx\\
&=&e^{2u^2}\inr e^{-(x+u)^2}\,dx\\
&=&\sqrt\pi e^{(\sqrt2 u)^2}.
\end{eqnarray*}
It follows that
$$F(u)=\sqrt{\frac\pi2}A(\sqrt2\,u)+C.$$
Since $A(0)=0$ and $A(u)$ is odd (because $e^{u^2}$ is even), we have
$$F(0)=\inr e^{-2x^2}A(-x)\,dx=0.$$
This shows that $C=0$, or
$$F(u)=\sqrt{\frac\pi2}A(\sqrt2\,u).$$
Going back to the formula for $C(w)$, we obtain
$$C(w)=\sqrt{\frac\pi2}A(\sqrt2\,\overline w)+\sqrt{\frac\pi2}A(-\sqrt2\,\overline w)=0,$$
because $A(u)$ is odd again. Therefore,
$$J(z)=-\sqrt{\frac\pi2}A\left(\frac{z-\overline w}{\sqrt2}\right)e^{\frac12(\overline w+z)^2},$$
and
\begin{eqnarray*}
BHB^{-1}f(z)&=&-\frac2{\sqrt\pi}e^{-\frac{z^2}2}\inc f(w)e^{-\frac12\overline w^2+\frac12(\overline w+z)^2}\,
A\left(\frac{z-\overline w}{\sqrt2}\right)\,d\lambda(w)\\
&=&-\frac2{\sqrt\pi}\inc f(w)e^{z\overline w}A\left(\frac{z-\overline w}{\sqrt2}\right)\,d\lambda(w).
\end{eqnarray*}
We summarize the result of this analysis as the following theorem.

\begin{thm}
Suppose $A(z)$ is the anti-derivative of $e^{z^2}$ with $A(0)=0$ and $T=BHB^{-1}$. Then
$$Tf(z)=-\frac2{\sqrt\pi}\inc f(w)e^{z\overline w}A\left(\frac{z-\overline w}{\sqrt2}\right)\,d\lambda(w)$$
for $f\in F^2$ and $z\in\C$.
\label{24}
\end{thm}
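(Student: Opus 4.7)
The plan is to compute $BHB^{-1}f(z)$ as an iterated integral over three variables --- $t$ from $H$, $x$ from $B$, and $w$ from $B^{-1}$ --- and peel off the inner integrals by identifying them as antiderivatives of $e^{u^2}$. Following the pattern established in the earlier proofs of the paper, I would take $f$ to be a polynomial so that Fubini is freely available once the principal value in $t$ is disposed of, and extend to general $f\in F^2$ by density.

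The first step is to compute $HB^{-1}f(x)$ by swapping the order of integration and pushing the principal value integral inside, which produces the entire function
$$h(u)=\inr\frac{e^{-t^2+2tu}}{t}\,dt=\inr\frac{e^{-t^2}(e^{2tu}-1)}{t}\,dt,$$
where the second form is honest (no singularity at $t=0$ or at infinity). Differentiating under the integral and completing the square yields $h'(u)=2\sqrt\pi\,e^{u^2}$, and $h(0)=0$ then forces $h(u)=2\sqrt\pi\,A(u)$. This expresses $HB^{-1}f(x)$ as an explicit Gaussian integral over $\C$ whose kernel contains the factor $A(\overline w-x)$.

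The second step is to apply $B$, swap orders once more, and reduce to evaluating the entire function
$$J(z)=\inr e^{-2x^2+2x(\overline w+z)}A(\overline w-x)\,dx.$$
Differentiating in $z$, writing $2x=(2x-(\overline w+z))+(\overline w+z)$, and integrating the first piece by parts using $A'(u)=e^{u^2}$ converts this into the first-order linear ODE
$$J'(z)-(\overline w+z)J(z)=-\tfrac{\sqrt\pi}{2}\,e^{\overline w^2+z^2}.$$
With integrating factor $e^{-\frac12(\overline w+z)^2}$ one obtains
$$J(z)\,e^{-\frac12(\overline w+z)^2}=-\sqrt{\pi/2}\,A\!\left(\frac{z-\overline w}{\sqrt 2}\right)+C(w)$$
for some $z$-independent quantity $C(w)$.

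The main obstacle is pinning down $C(w)$. Evaluating at $z=-\overline w$ reduces the problem to showing $J(-\overline w)=\sqrt{\pi/2}\,A(\sqrt 2\,\overline w)$, once one uses that $A$ is odd (because $e^{u^2}$ is even). To obtain this I would repeat the ODE trick on the auxiliary function $F(u)=\inr e^{-2x^2}A(u-x)\,dx$: differentiating and completing the square gives $F'(u)=\sqrt\pi\,e^{2u^2}$, so $F(u)=\sqrt{\pi/2}\,A(\sqrt 2\,u)+C$, and $F(0)=0$ by oddness of $A$ together with evenness of the Gaussian weight. This forces $C=0$, hence $C(w)=0$ in the formula for $J$. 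Substituting the resulting closed form of $J(z)$ back into the expression for $BHB^{-1}f(z)$, the Gaussian factors in $\overline w^2$ and $z^2$ telescope to leave precisely the kernel $e^{z\overline w}A((z-\overline w)/\sqrt 2)$, and tracking the constant $c=(2/\pi)^{1/4}$ through the computation produces the advertised prefactor $-2/\sqrt\pi$.
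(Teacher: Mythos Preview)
Your proposal is correct and follows the paper's proof essentially step for step: the same principal-value function $h(u)$ identified as $2\sqrt\pi\,A(u)$, the same auxiliary function $J(z)$ attacked via the first-order ODE obtained from the splitting $2x=(2x-(\overline w+z))+(\overline w+z)$ and integration by parts, the same integrating factor, and the same determination of the constant $C(w)$ via the auxiliary $F(u)=\int_\R e^{-2x^2}A(u-x)\,dx$ together with the oddness of $A$. There is no meaningful difference between your argument and the paper's.
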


This appears to be a very interesting integral operator on the Fock space. Note that we clearly have
$$T(1)(z)=-\frac2{\sqrt\pi}\,A\left(\frac z{\sqrt2}\right),$$
which must be a function in $F^2$. The following calculation gives an alternate proof that this function is 
indeed in $F^2$.

\begin{lem}
The function
$$f(z)=A\left(\frac z{\sqrt2}\right)$$
belongs to the Fock space $F^2$.
\label{25}
\end{lem}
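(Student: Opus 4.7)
The plan is to expand $f(z)=A(z/\sqrt2)$ as a power series and apply Parseval's identity in $F^2$ relative to the orthonormal basis $\{z^k/\sqrt{k!}\}_{k\ge 0}$. Integrating the Taylor series $e^{u^2}=\sum_{n\ge 0}u^{2n}/n!$ term by term gives
$$A(z/\sqrt 2)=\sum_{n=0}^\infty\frac{z^{2n+1}}{2^{n+1/2}\,n!\,(2n+1)},$$
an entire function supported on odd powers of $z$. Since $\|z^k\|_{F^2}^2=k!$, Parseval then yields
$$\|f\|_{F^2}^2=\sum_{n=0}^\infty\frac{(2n+1)!}{2^{2n+1}(n!)^2(2n+1)^2}=\frac12\sum_{n=0}^\infty\frac{1}{2n+1}\cdot\frac{1}{4^n}\binom{2n}{n}.$$

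It remains only to verify that this numerical series converges. By Stirling's formula, $\binom{2n}{n}/4^n\sim 1/\sqrt{\pi n}$, so the $n$-th summand is $O(n^{-3/2})$ and convergence is immediate. There is no real obstacle here: the argument is just the series manipulation above together with one elementary asymptotic estimate. The only thing to be slightly careful about is the term-by-term integration of $e^{u^2}$, which is justified because the series for $e^{u^2}$ converges uniformly on every compact set in $\mathbb C$.

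As a sanity check, the right-hand side is recognizable as half the Maclaurin series for $\arcsin$ evaluated at $x=1$, since $\arcsin x=\sum_{n\ge 0}\binom{2n}{n}x^{2n+1}/[(2n+1)4^n]$ for $|x|\le 1$. Thus $\|f\|_{F^2}^2=\tfrac12\arcsin 1=\pi/4$. Combined with $T(1)(z)=-(2/\sqrt\pi)f(z)$ from Theorem~\ref{24}, this gives $\|T(1)\|_{F^2}=1$, exactly what one expects from the fact that $T=BHB^{-1}$ is a unitary operator on $F^2$ and the constant function~$1$ has unit $F^2$-norm.
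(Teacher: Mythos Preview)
Your proof is correct and follows essentially the same route as the paper: expand $A(z/\sqrt2)$ as a power series, apply Parseval in $F^2$, and bound the resulting coefficient sum via Stirling. Your additional identification of the series with $\tfrac12\arcsin 1=\pi/4$, and the resulting check that $\|T(1)\|_{F^2}=1$ in accordance with the unitarity of $T$, is a nice touch that the paper does not include.
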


\begin{proof}
Since
$$e^{z^2}=\sum_{n=0}^\infty\frac1{n!}\,z^{2n},$$
we have
$$A(z)=\sum_{n=0}^\infty\frac1{(2n+1)n!}\,z^{2n+1},$$
and so
\begin{eqnarray*}
f(z)&=&\frac1{\sqrt2}\sum_{n=0}^\infty\frac1{(2n+1)2^nn!}\,z^{2n+1}\\
&=&\frac1{\sqrt2}\sum_{n=0}^\infty\frac{\sqrt{(2n+1)!}}{(2n+1)2^nn!}\,e_{n+1}(z),
\end{eqnarray*}
where $\{e_n\}$ is the standard monomial orthonormal basis for $F^2$. It follows that
$$\|f\|^2=\frac12\sum_{n=0}^\infty\frac{(2n+1)!}{(2n+1)^24^n(n!)^2}.$$
By Stirling's formula, it is easy to check that
$$\|f\|^2\sim\sum_{n=1}^\infty\frac1{n^{3/2}}<\infty.$$
This proves the desired result.
\end{proof}

A natural problem here is to study the spectral properties of the integral operator $T$ above (or
equivalently, the Hilbert transform as an operator on $L^2(\R)$):
fixed-points, eigenvalues, spectrum, invariant subspaces, etc. Not much appears to be
known, which is in sharp contrast to the case of the Fourier transform. Recall that Corollary~\ref{6} 
gives a complete spectral picture for the Fourier transform $F$ as an operator on $L^2(\R)$.

Motivated by Theorem \ref{24}, we consider more general ``singular integral operators" on $F^2$ of the form
$$S_\varphi f(z)=\inc f(w)e^{z\overline w}\varphi(z-\overline w)\,d\lambda(w),$$
where $\varphi$ is any function in $F^2$. The most fundamental problem here is to characterize those 
$\varphi\in F^2$ such that $S_\varphi$ is bounded on $F^2$.

To show that the problem is interesting and non-trivial, we present several examples in the rest of this section.

If $\varphi=1$, it follows from the reproducing property of the kernel function $e^{z\overline w}$ that $S_\varphi$ 
is the identity operator.

If $\varphi(z)=z$, it is easy to verify that 
$$S_\varphi f(z)=zf(z)-f'(z),$$ 
which shows that $S_\varphi$ is unbounded on $F^2$; see \cite{CZ2}. This may appear discouraging, as the 
function $\varphi(z)=z$ appears to be as nice as it can be (except constant functions) in $F^2$.
But we will see that there are many other nice functions $\varphi$ that induce bounded operators $S_\varphi$.

First consider functions of the form $\varphi(z)=e^{z\overline a}$. An easy calculation in \cite{Z3} shows that
$$S_\varphi f(z)=e^{(\overline a-a)z+\frac{|a|^2}2}W_{\overline a}f(z),$$
where $W_z$ are the Weyl operators (which are unitary on $F^2$). Thus $S_\varphi$ is bounded if and only if 
$a$ is real, because the only point-wise multipliers of the Fock space are constants.

Next, it was shown in \cite{Z3} again that the operator $S_\varphi$ induced by $\varphi(z)=e^{az^2}$, where
$0<a<1/2$, is bounded on $F^2$. Furthermore, the range for $a$ above is best possible.

Finally, it was observed in \cite{Z3} that the Berezin transform of $S_\varphi$ is given by
$$\langle S_\varphi k_z,k_z\rangle=\varphi(z-\overline z),\qquad z\in\C.$$
Therefore, a necessary condition for $S_\varphi$ to be bounded on $F^2$ is that the function
$\varphi$ be bounded on the imaginary axis. It would be nice to know how far away is the condition from
being sufficient as well.

\section{Pseudo-differential operators}

In this section we explain that, under the Bargmann transform and with mild assumptions on the symbol functions, 
Toeplitz operators on the Fock space are unitarily equivalent to pseudo-differential operators on $L^2(\R)$.

Recall that if $\varphi=\varphi(z)$ is a symbol function on the complex plane, the Toeplitz operator
$T_\varphi$ on $F^2$ is defined by $T_\varphi f=P(\varphi f)$, where 
$$P:L^2(\C,d\lambda)\to F^2$$
is the orthogonal projection. We assume that $\varphi$ is good enough so that the operator $T_\varphi$ is
at least densely defined on $F^2$.

To be consistent with the theory of pseudo-differential operators, in this section we use $X$ and $D$ to denote 
the following operators on $L^2(\R)$:
$$Xf(x)=xf(x),\qquad Df(x)=\frac{f'(x)}{2i}.$$
Both $X$ and $D$ are self-adjoint and densely defined on $L^2(\R)$. We also consider the following
operators on $F^2$:
$$Z=X+iD,\qquad Z^*=X-iD.$$

There are several notions of pseudo-differential operators. We mention two of them here. First, if
$$\sigma=\sigma(z,\overline z)=\sum a_{mn}z^n\overline z^m$$
is a real-analytic polynomial on the complex plane, we define
$$\sigma(Z,Z^*)=\sum a_{mn}Z^nZ^{*m}$$
and call it the anti-Wick pseudo-differential operator with symbol $\sigma$.

\begin{thm}
Suppose $\sigma=\sigma(z,\overline z)$ is a real-analytic polynomial and $\varphi(z)=\sigma(\overline z,z)$.
Then $B\sigma(Z,Z^*)B^{-1}=T_\varphi$.
\label{26}
\end{thm}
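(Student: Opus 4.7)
The plan is to verify the identity on the polynomial subspace of $F^2$, which is a common dense invariant core for both sides and corresponds under $B^{-1}$ to finite linear combinations of Hermite functions (Schwartz functions, so the operators $X,D$ preserve them).

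First I would translate the generators $Z$ and $Z^*$ to operators on $F^2$. Since $X=T$ (multiplication by $x$) and $D=S/(2i)$ with $S=d/dx$, Theorems \ref{19} and \ref{20} yield
$$BXB^{-1}f(z)=\tfrac12\bigl(zf(z)+f'(z)\bigr),\qquad BDB^{-1}f(z)=\tfrac1{2i}\bigl(f'(z)-zf(z)\bigr).$$
Adding and subtracting, the cross terms cancel and one finds the remarkably clean formulas
$$BZB^{-1}f(z)=f'(z),\qquad BZ^*B^{-1}f(z)=zf(z).$$
Thus under the Bargmann transform, $Z$ becomes the derivative $d/dz$ and $Z^*$ becomes the multiplication operator $M_z$.

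Next, I would show that for nonnegative integers $m,n$ and a polynomial $f\in F^2$,
$$T_{\overline z^{\,n}z^m}f=\left(\frac{d}{dz}\right)^{\!n}M_z^{\,m}f.$$
Indeed, $z^mf(z)$ is already holomorphic, so $P(z^mf)=z^mf$; then writing
$$T_{\overline z^{\,n}z^m}f(w)=\inc\overline z^{\,n}z^mf(z)e^{w\overline z}\,d\lambda(z)=\frac{\partial^n}{\partial w^n}\inc z^mf(z)e^{w\overline z}\,d\lambda(z)=\frac{d^n}{dw^n}\bigl(w^mf(w)\bigr),$$
where differentiating under the integral sign is justified by the Gaussian weight and the fact that $f$ is a polynomial.

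Finally I would assemble the two steps. Since $\sigma$ is a polynomial, so is $\varphi(z)=\sigma(\overline z,z)=\sum a_{mn}\overline z^{\,n}z^m$, and conjugation by the unitary $B$ is a $*$-algebra homomorphism on polynomial expressions in $Z,Z^*$, giving
$$B\sigma(Z,Z^*)B^{-1}=\sum a_{mn}(BZB^{-1})^n(BZ^*B^{-1})^m=\sum a_{mn}\left(\frac d{dz}\right)^{\!n}M_z^{\,m}.$$
By the second step this is exactly $\sum a_{mn}T_{\overline z^{\,n}z^m}=T_\varphi$, establishing the theorem on polynomials and hence (by density of polynomials in their common core) as an identity of densely defined operators. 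The only real obstacle is the bookkeeping of unbounded operators: one must fix the polynomial core once and for all, and observe that the anti-Wick ordering $Z^nZ^{*m}$ in the definition of $\sigma(Z,Z^*)$ is precisely what forces the derivatives $(d/dz)^n$ to land to the left of the multiplications $M_z^m$, matching the order in which the Bergman projection $P$ "sees" $\overline z^{\,n}$ versus $z^m$ in the symbol.
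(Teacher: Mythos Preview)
Your argument is correct. The paper itself gives no proof of this theorem, simply citing \cite{Z3}; you have supplied a complete self-contained argument using the results already established in the paper (Theorems~\ref{19} and~\ref{20}), together with the explicit integral formula for the projection $P$. The two key computations---that $BZB^{-1}=d/dz$ and $BZ^*B^{-1}=M_z$ on the one hand, and that $T_{\overline z^{\,n}z^m}=(d/dz)^nM_z^{\,m}$ via differentiation under the integral sign on the other---are both clean and standard, and your remark about the anti-Wick ordering being exactly what aligns the two sides is the conceptual heart of the matter. Since the paper omits the details, no comparison of approaches is possible, but your proof is in the spirit one would expect the cited reference to follow.
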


This was proved in \cite{Z3}, so we omit the details here.

A more widely used notion of pseudo-differential operators is defined in terms of the Fourier transform.
More specifically, if $\sigma=\sigma(\zeta,x)$ is a symbol function on $\R^2=\C$ and if $f\in L^2(\R)$, we define
$$\sigma(D,X)f(x)=\frac1\pi\inr\inr\sigma\left(\zeta,\frac{x+y}2\right)e^{2i(x-y)\zeta}f(y)\,dy\,d\zeta.$$
We assume that $\sigma$ is good enough so that the operator $\sigma(D,X)$ is at least densely defined
on $L^2(\R)$. We call $\sigma(D,X)$ the Weyl pseudo-differential operator with symbol $\sigma$.

\begin{thm}
Suppose $\varphi=\varphi(z)$ is a (reasonably good) function on the complex plane. For $z=x+i\zeta$ we define
$$\sigma(z)=\sigma(\zeta,x)=\frac2\pi\inc\varphi(\overline w)e^{-2|z-w|^2}\,dA(w).$$
Then $B\sigma(D,X)B^{-1}=T_\varphi$.
\label{27}
\end{thm}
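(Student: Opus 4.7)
My plan is to prove $B\sigma(D,X)B^{-1}=T_\varphi$ by a direct unfolding of all three integral operators, in the style of Theorems~\ref{8}, \ref{9}, and~\ref{19}. I would test on an arbitrary polynomial $f\in F^2$ (dense in $F^2$) so that Fubini's theorem applies freely throughout.

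Concretely, I would first substitute the explicit formula for $B^{-1}f$ into the defining double integral of $\sigma(D,X)$, and then substitute
$$\sigma\!\left(\zeta,\frac{x+y}{2}\right)=\frac2\pi\inc\varphi(\overline u)\exp\!\left(-2\left|\frac{x+y}{2}+i\zeta-u\right|^2\right)dA(u)$$
into that same expression. This produces a multi-fold integral over $y,\zeta,x,w,u$. Bring $u$ and $w$ to the outside, apply $B$ on the left (introducing one more real integral in a new variable $x'$), and interchange orders so that the three inner Gaussian integrals in $y$, $\zeta$, and $x'$ can be done in turn by completing the square, each being of the elementary form $\inr e^{-ax^2+bx}\,dx=\sqrt{\pi/a}\,e^{b^2/(4a)}$. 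Once the constants are collected, the surviving double integral in $w,u$ should separate into a Gaussian kernel in $u$ against $\varphi(\overline u)$ multiplied by the reproducing-kernel factor $e^{z\overline w}f(w)$. A final Gaussian integration in $u$ collapses $u$ against $\overline w$, yielding
$$B\sigma(D,X)B^{-1}f(z)=\inc\varphi(w)f(w)e^{z\overline w}\,d\lambda(w)=T_\varphi f(z).$$

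The structural reason this works is that $T_\varphi$ is the anti-Wick quantization of $\varphi$, with the well-known integral representation $T_\varphi=\pi^{-1}\inc\varphi(w)|k_w\rangle\langle k_w|\,dA(w)$; under the Bargmann transform, each rank-one projection $|k_w\rangle\langle k_w|$ becomes the projection onto a translated, modulated Gauss function in $L^2(\R)$, whose Weyl symbol is the Wigner distribution of a Gauss function and hence a Gaussian in phase space. The convolution in the hypothesis is precisely the superposition of these Gaussians. The main obstacle to implementing the direct calculation is combinatorial: one must track a quadratic form in five or six real variables through successive completions of the square, and verify that the various prefactors -- the $c^2=\sqrt{2/\pi}$ from $B\circ B^{-1}$, the $1/\pi$ from the Weyl kernel, and the $2/\pi$ from the Gaussian convolution defining $\sigma$ -- combine exactly to produce the Fock measure $d\lambda(w)=\pi^{-1}e^{-|w|^2}dA(w)$. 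A secondary subtlety worth flagging is the appearance of $\varphi(\overline u)$ rather than $\varphi(u)$ in the hypothesis: this conjugation is forced by the complex parametrization $z=x+i\zeta$ of Weyl phase space, which must be paired with the antiholomorphic coordinate $\overline w$ in the reproducing kernel $e^{z\overline w}$ in order for the final $u$-integration to collapse cleanly to the reproducing-kernel identity on $F^2$.
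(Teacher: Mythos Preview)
The paper does not actually supply a proof of this theorem: the entire argument in the text is the single sentence ``Again, this was proved in \cite{Z3}.'' So there is nothing to compare against at the level of details, and your proposal is already considerably more explicit than what the paper offers.

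That said, your plan is sound. The brute-force route you describe---unfold $B^{-1}$, the Weyl kernel, and the Gaussian convolution defining $\sigma$, then perform the three real Gaussian integrals in $y$, $\zeta$, and the outer Bargmann variable by completing the square---does work, and the bookkeeping of constants you flag (the $c^2$, the $1/\pi$, and the $2/\pi$) is exactly the place where such computations go wrong if one is careless. Your remark about the conjugation $\varphi(\overline u)$ is also to the point: it reflects the fact that under $B$ the ``annihilation'' coordinate $Z=X+iD$ becomes $\partial_z$ while $Z^*=X-iD$ becomes multiplication by $z$, so the phase-space point $x+i\zeta$ must be paired with $\overline w$ rather than $w$.

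The structural argument in your second paragraph is in fact the cleaner and more standard proof, and is almost certainly what lies behind the reference to \cite{Z3}. Writing $T_\varphi=\pi^{-1}\int_\C\varphi(w)\,|k_w\rangle\langle k_w|\,dA(w)$ and conjugating by $B^{-1}$ turns each rank-one piece into the projection onto a time-frequency shift of the Gaussian; the Weyl symbol of that projection is its Wigner distribution, namely $2e^{-2|z-w|^2}$ (with the appropriate identification of coordinates and the conjugation). Integrating against $\varphi(w)$ then gives exactly the convolution in the hypothesis. This avoids the five-variable quadratic form entirely. If you were to write the argument up, I would lead with this conceptual version and relegate the direct Gaussian computation to a verification of the normalization.
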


Again, this was proved in \cite{Z3}. Furthermore, the result was used in \cite{Z3} to study Toeplitz operators on 
$F^2$ with the help of the more mature theory of pseudo-differential operators on $L^2(\R)$.

\section{Further results and remarks}

For any $0<p\le\infty$ let $F^p$ denote the space of entire functions $f$ such that the function 
$f(z)e^{-\frac12|z|^2}$ belongs to $L^p(\C,dA)$. For $p<\infty$ and $f\in F^p$ we write
$$\|f\|^p_{F^p}=\frac{p}{2\pi}\inc\left|f(z)e^{-\frac12|z|^2}\right|^p\,dA(z).$$
For $f\in F^\infty$ we define
$$\|f\|_{F^\infty}=\sup_{z\in\C}|f(z)|e^{-\frac12|z|^2}.$$
These spaces $F^p$ are also called Fock spaces.

Let $f\in L^\infty(\R)$ and let $c=(2/\pi)^{1/4}$. We have
\begin{eqnarray*}
|Bf(z)|&\le&c\|f\|_\infty|e^{-z^2/2}|\inr\left|e^{2xz-x^2}\right|\,dx\\
&=&c\|f\|_\infty e^{-\frac12\re(z^2)+(\re z)^2}\inr e^{-(x-\re z)^2}\,dx\\
&=&c\sqrt\pi\|f\|_\infty e^{\frac12|z|^2}.
\end{eqnarray*}
Here we used the elementary identity
$$-\frac12\re(z^2)+(\re z)^2=\frac12|z|^2,$$
which can be verified easily by writing $z=u+iv$. Therefore, we have shown that
$$\|Bf\|_{F^\infty}\le c\sqrt\pi\|f\|_\infty$$
for all $f\in L^\infty(\R)$. In other words, the Bargmann transform $B$ is a bounded linear
operator from $L^\infty(\R)$ into the Fock space $F^\infty$. By complex interpolation (see
\cite{Z2}), we have proved the following.

\begin{thm}
For any $2\le p\le\infty$, the Bargmann transform maps $L^p(\R)$ boundedly into $F^p$.
\label{28}
\end{thm}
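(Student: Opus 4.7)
The plan is to reduce the statement to a standard Riesz--Thorin interpolation between the two endpoint cases $p=2$ and $p=\infty$, which are already essentially in hand. The only mild wrinkle is that the target spaces $F^p$ carry weights, so the first step is to absorb the weight into the operator.

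Concretely, I would introduce the weighted transform $\widetilde B\!f(z)=(Bf)(z)\,e^{-|z|^2/2}$, which is plainly a linear operator in $f$. By the very definition of the $F^p$-norm, one has $\|Bf\|_{F^p}$ comparable (up to a fixed constant depending only on $p$) to $\|\widetilde B\!f\|_{L^p(\C,dA)}$ for $p<\infty$, and equal for $p=\infty$. Hence boundedness of $B:L^p(\R)\to F^p$ is equivalent to boundedness of $\widetilde B:L^p(\R,dx)\to L^p(\C,dA)$, and the sigma-finite measure spaces $(\R,dx)$ and $(\C,dA)$ are in the correct form to feed into Riesz--Thorin.

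Next I would record the two endpoints. At $p=2$, Corollary~\ref{2} says $B$ is unitary from $L^2(\R)$ onto $F^2$, so $\widetilde B$ is bounded (in fact an isometry up to the factor $\sqrt\pi$ coming from the $1/\pi$ in $d\lambda$) from $L^2(\R,dx)$ into $L^2(\C,dA)$. At $p=\infty$, the computation immediately preceding the theorem already shows $\|Bf\|_{F^\infty}\le c\sqrt\pi\,\|f\|_\infty$, i.e.\ $\widetilde B:L^\infty(\R)\to L^\infty(\C,dA)$ is bounded with norm at most $c\sqrt\pi$. These are precisely the two endpoint hypotheses of the Riesz--Thorin theorem for the same linear operator $\widetilde B$ acting on the compatible couple $(L^2,L^\infty)$ over the two measure spaces.

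Finally, applying Riesz--Thorin with interpolation parameter $\theta=2/p\in[0,1]$ yields boundedness of $\widetilde B:L^p(\R,dx)\to L^p(\C,dA)$ for every $2\le p\le\infty$, with an operator norm bounded by $(\sqrt\pi)^{2/p}(c\sqrt\pi)^{1-2/p}$. Translating back via the equivalence $\|Bf\|_{F^p}\asymp\|\widetilde B\!f\|_{L^p(\C,dA)}$ gives the desired bound $\|Bf\|_{F^p}\le C_p\|f\|_{L^p(\R)}$. There is no real obstacle here: the only points that need to be checked carefully are the normalization constants relating $F^p$ to the weighted $L^p(\C,dA)$-norm (these are harmless numerical factors), and the fact that $\widetilde B$ is a genuinely linear operator to which Riesz--Thorin applies, which it is because the weight $e^{-|z|^2/2}$ is fixed and does not depend on $f$.
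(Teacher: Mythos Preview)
Your proposal is correct and follows essentially the same route as the paper: the paper establishes the $p=\infty$ endpoint by the pointwise estimate immediately preceding the theorem, invokes the $p=2$ unitarity of $B$, and then appeals to complex interpolation (citing \cite{Z2}). Your weight-absorption trick $\widetilde Bf(z)=(Bf)(z)e^{-|z|^2/2}$ simply makes explicit the reduction to standard Riesz--Thorin that underlies the Fock-space interpolation cited there, so the two arguments are the same in substance.
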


It would be nice to find out the mapping properties of the Bargmann transform on the spaces
$L^p(\R)$ for $0<p<2$. A more tractable range may be $1<p<2$ or $1\le p<2$.

Among other mapping properties of the Bargmann transform we mention the following two: if $S_0$ is the
Schwarz class, what is the image of $S_0$ in $F^2$ under the Bargmann transform? And what properties 
does $Bf$ have if $f$ is a function in $L^2(\R)$ with compact support?

One of the fundamental questions in Gabor analysis is to characterize all possible window functions.
Via the Bargmann transform we know that this is equivalent to the following problem: characterize all
functions $g\in F^2$ such that $\{W_{z_n}g\}$ is a Gabor frame for some $\{z_n\}\subset\C$. In terms
of ``atomic decomposition'', this is also equivalent to the following problem: characterize functions
$g\in F^2$ such that for some $\{z_n\}$ we have ``atomic decomposition'' for $F^2$: functions of the form
$$f=\sum_{n=1}^\infty c_nW_{z_n}g,\qquad\{c_n\}\in l^2,$$
represent exactly the space $F^2$.

Another well-known problem in time-frequency analysis is the so-called ``linear independence'' problem;
see \cite{H}. More specifically, given any function $g\in L^2(\R)$ and distinct points $z_k=a_k+ib_k$, 
$1\le k\le N$, are the functions
$$g_k(x)=M_{a_k}T_{b_k}g(x)=e^{2\pi b_kix}g(x-a_k),\qquad 1\le k\le N,$$
linearly independent in $L^2(\R)$? Via the Bargmann transform, this is equivalent to the following
problem for the Fock space: given any nonzero function $f\in F^2$ and distinct points $z_k\in\C$, $1\le k\le N$, 
are the functions
$$W_{z_k}f(z)=e^{z\overline z_k-\frac{|z_k|^2}2}f(z-z_k),\qquad 1\le k\le N,$$
always linearly independent in $F^2$? Equivalently, are the functions
$$f_k(z)=e^{z\overline z_k}f(z-z_k),\qquad 1\le k\le N,$$
always linearly independent in $F^2$?


\begin{thebibliography}{99}

\bibitem{ALS} G. Ascensi, Y. Lyubarskii, and K. Seip, Phase space distribution of Gabor expansions,
{\sl Appl. Comput. Harmon. Anal.} {\bf 26} (2009), 277-282.
\bibitem{Barg1} V. Bargmann, On a Hilbert space of analytic functions and an associated integral transform I,
{\sl Comm. Pure Appl. Math.} {\bf 14} (1961), 187-214.
\bibitem{Barg2} V. Bargmann, On a Hilbert space of analytic functions and an associated integral transform II,
{\sl Comm. Pure Appl. Math.} {\bf 20} (1967), 1-101.
\bibitem{BCI} W. Bauer, L. Coburn, and J. Isralowitz, Heat flow, BMO, and compactness of Toeplitz
operators, {\sl J. Funct. Anal.} {\bf 259} (2010), 57-78.
\bibitem{Berezin1} F. Berezin, Covariant and contra variant symbols of operators, {\sl Math. USSR-Izv.}
{\bf 6} (1972), 1117-1151.
\bibitem{Berezin2} F. Berezin, Quantization, {\sl Math. USSR-Izv.} {\bf 8} (1974), 1109-1163.
\bibitem{BC1} C. Berger and L. Coburn, Toeplitz operators and quantum mechanics, {\sl J. Funct. Anal.}
{\bf 68} (1986), 273-299.
\bibitem{BC2} C. Berger and L. Coburn, Toeplitz operators on the Segal-Bargmann space, {\sl Trans. Amer.
Math. Soc.} {\bf 301} (1987), 813-829.
\bibitem{BC3} C. Berger and L. Coburn, Heat flow and Berezin-Toeplitz estimates, {\sl Amer. J. Math.}
{\bf 116} (1994), 563-590.
\bibitem{CZ} Y. Chen and K. Zhu, Uncertainty principles for the Fock space, {\sl Sci. China. Math.}, to appear.
\bibitem{CZ2} H. Cho and K. Zhu, Fock-Sobolev spaces and their Carleson measures, {\sl J. Funct. Anal.}
{\bf 263} (2012), 2483-2506.
\bibitem{C} O. Christensen, {\sl An Introduction to Frames and Riesz Basis}, Birkhauser, 2003.
\bibitem{Coburn1} L. Coburn, The Bargmann isometry and Gabor-Daubechies wavelet localization operators,
in {\sl Systems, Approximation, Singular Integral Operators, and Related Topics}, (Bordeaux 2000), 169-178;
{\sl Oper. Theory Adv. Appl.} {\bf 129}, Birkhauser, Basel, 2001.
\bibitem{Coburn2} L. Coburn, Berezin-Toeplitz quantization, in {\sl Algebraic Methods in Operator Theory},
101-108, Birkhauser, Boston, 1994.
\bibitem{CIL} L. Coburn, J. Isralowitz, and B. Li, Toeplitz operators with BMO symbols on the Segal-Bargmann
space, {\sl Trans. Amer. Math. Soc.} {\bf 363} (2011), 3015-3030.
\bibitem{DG} I. Daubechies and A. Grossmann, Frames in the Bargmann space of entire functions,
{\sl Comm. Pure. Appl. Math.} {\bf 41} (1988), 151-164.
\bibitem{F1} G. Folland, {\sl Fourier Analysis and Its Applications}, Brooks/Cole Publishing Company, 1992.
\bibitem{F2} G. Folland, {\sl Harmonic Analysis in Phase Space}, {\sl Ann. Math. Studies} {\bf 122}, Princeton
University Press, 1989.
\bibitem{G1} K. Gr\"ochenig, {\sl Foundations of Time-Frequency Analysis}, Birkh\"auser, Boston, 2001.
\bibitem{GW} K. Gr\"ochenig and D. Walnut, A Riesz basis for the Bargmann-Fock space related to sampling
and interpolation, {\sl Ark. Math.} {\bf 30} (1992), 283-295.
\bibitem{Guil} V. Guillemin, Toeplitz operators in n-dimensions, {\sl Integr. Equat. Oper. Theory} {\bf 7} (1984),
145-205.
\bibitem{H} C. Heil, History and evolution of the density theorem for Gabor frames, {\sl J. Fourier Anal. Appl.}
{\bf 13} (2007), 113-166.
\bibitem{JPR} S. Janson, J. Peetre, and R. Rochberg, Hankel forms and the Fock space,
{\sl Revista Mat. Ibero-Amer.} {\bf 3} (1987), 58-80.
\bibitem{Landau} H. Landau, Necessary density conditions for sampling and interpolation of certain entire functions,
{\sl Acta Math.} {\bf 117} (1967), 37-52.
\bibitem{Lyu} Y. Lyubarskii, Frames in the Bargmann space of entire functions, {\sl Adv. Soviet Math.} {\bf 429}
(1992), 107-113.
\bibitem{LS1} Y. Lyubarskii and K. Seip, Sampling and interpolation of entire functions and exponential systems
in convex domains, {\sl Ark. Mat.} {\bf 32} (1994), 157-194.
\bibitem{LS2} Y. Lyubarskii and K. Seip, Complete interpolating sequences for Paley-Wiener spaces and
Munckenhoupt's $A_p$ condition, {\sl Rev. Mat. Iberoamer.} {\bf 13} (1997), 361-376.
\bibitem{OS} J. Ortega-Cerd\'a and K. Seip, Fourier frames, {\sl Ann. Math.} {\bf 155} (2002), 789-806.
\bibitem{P} A. Perelomov, {\sl Generalized Coherent States and Their Applications}, Springer, Berlin, 1986.
\bibitem{Seip1} K. Seip, Reproducing formulas and double orthogonality in Bargmann and Bergman spaces,
{\sl SIAM J. Math. Anal.} {\bf 22} (1991), 856-876.
\bibitem{Seip2} K. Seip, Density theorems for sampling and interpolation in the Bargmann-Fock space I,
{\sl J. Reine Angew. Math.} {\bf 429} (1992), 91-106.
\bibitem{SW} K. Seip and R. Wallst\'en, Density theorems for sampling and interpolation in the Bargmann-Fock
space II, {\sl J. Reine Angew. Math.} {\bf 429} (1992), 107-113.
\bibitem{Z1} K. Zhu, {\sl Operator Theory in Function Spaces}, American Mathematical Society, 2007.
\bibitem{Z2} K. Zhu, {\sl Analysis on Fock Spaces}, Springer, New York, 2012.
\bibitem{Z3} K. Zhu, Singular integral operators on the Fock space, {\sl Integr. Equat. Oper. Theory}, 
{\bf 81} (2015), 451-454.


\end{thebibliography}
\end{document}